\newtheorem{proposition}{Proposition}
\newtheorem{theorem}[proposition]{Theorem}
\newtheorem{lemma}[proposition]{Lemma}
\newcommand{\N}{\mathbb{N}}
\newcommand{\A}{\mathbb{A}}
\newcommand{\B}{\mathbb{B}}
\newcommand{\Sl}{\mathbb{S}}
\newcommand{\La}{\mathbb{L}}
\newcommand{\C}{\mathbb{C}}
\newcommand{\G}{\mathbb{G}}
\def \RR{{\mathcal R}}
\def \HH{{\mathcal H}}
\def \LL{{\mathcal L}}
\def \DD{{\mathcal D}}
\def \FF{{\mathcal F}}
\def \CC{{\mathcal C}}
\def \FF{{\mathcal F}}
\def \UU{{\mathcal U}}
\def \AA{{\mathcal A}}
\def \BB{{\mathcal B}}
\title{\emph{ON THE COSET STRUCTURE \\ OF A SKEW LATTICE}}
\author{\textit{Jo\~ao Pita Costa}\footnote{The author thanks the support
of Funda\c c\~ao para a Ci\^ encia e Tecnologia with the reference SFRH / BD / 36694
/ 2007.}\bigskip\\
University of Ljubljana,\\
Faculty of Mathematics and Physics,\\
Jadranska 19, 1000 Ljubljana, Slovenia.\\
 joaopitacosta@gmail.com}
\date{\today}
\begin{document}

\maketitle

\begin{abstract}

The class of skew lattices can be seen as an algebraic category. It models an algebraic theory in the category of Sets where the Green's relation $D$ is a congruence describing an adjunction to the category of Lattices. In this paper we will discuss the relevance of this approach, revisit some known decompositions and relate the order structure of a skew lattice with its coset structure that describes the internal coset decomposition of the respective skew lattice.

\bigskip

\noindent \emph{Keywords:} noncommutative lattice, skew lattice, band of semigroups, Green's relations, coset structure, regularity, symmetry, categoricity.\medskip

\noindent \emph{2000 Mathematics Subject Classification:}
Primary: 06A11; Secondary: 06F05.

\end{abstract}

\newpage

\section*{Introduction}

Skew lattices have been studied for the past thirty years as a noncommutative variation of the variety of Lattices with motivations in Semigroup Theory, in Linear Algebra as well as in Universal Algebra. The study of noncommutative lattices begins in 1949 by Pascual Jordan \cite{Jo49} that in 1961 presents a wide review on the subject \cite{Jo61}. It is later approached by Slav\'ik \cite{Sl73a} and Cornish \cite{Co80} that refer to a special variety of noncommutative lattices, namely \textit{skew lattices}. A more general version of these skew lattices is due to Jonathan Leech, first announced in \cite{Le86}. Slavik's algebras lead to a left version of Leech's skew lattices researched by Cornish. The Green's relation $\DD $ defined in a skew lattice \textbf{S} is a congruence and has revealed its important role on the study of these algebras, permitting us a further approach to the coset structure of a skew lattice \cite{Le93}.\medskip

The first section of this paper is dedicated to the approach to skew lattices as algebraic theories discussing several characterizations related with the choice of the possible absorption laws. From it we derive the algebraic category of skew lattices, unveiling several results about these algebras due to the nature of the category they constitute. Decomposition theorems, first presented by Leech in \cite{Le89} and \cite{Le93} are discussed in the third section.  Here we will give special attention to the important role of the natural congruence that in a skew lattice coincides with Green's relation $\DD $. It will determine an adjunction relating skew lattices and lattices and will provide the possibility for the approach developed in the last section. It is known as \textit{Leech's first decomposition}. An alternative decomposition, also mentioned in \cite{Le93} as the complement of \textit{the first decomposition}, regards maximal connected subalgebras instead of maximal rectangular subalgebras. It opens the question of weather it is possible to approach it in a similar way. \textit{Leech's second decomposition} ends the section, referring to a certain "horizontal duality" regarding right/left versions of these algebras, referenced in \cite{Le96} and common to semigroup theory. It is distinct from the "vertical duality", the duality referring to the $\wedge $ and $\vee $ operations, extensively studied in lattice theory. The last section focus on the coset structure of skew lattices, the structural approach provided by the natural congruence $\DD $ giving us many advantages on the further study of skew lattices. The coset structure, introduced for the first time in \cite{Le93}, is revisited here with a categorical approach the follows through all the paper.  \medskip

As this paper aims to present a review on skew lattices as algebraic categories and several of their main properties through the language of Category Theory and Universal Algebra, the reader should be familiar to the basics of these theories.  We suggest further readings in category theory, \cite{La98} and \cite{La98}; in universal algebra \cite{Gr79}, in lattice theory \cite{Ba67} and in semigroup theory \cite{Ho76}. As for notation we will use capital letters $A,B,C,.. $ to represent sets, bold capital letters $\textbf{A,B,C,..}$ to represent algebras, latin letters $\A, \B, \C,.. $ to represent algebraic theories and gothic letters $\AA, \BB, \CC ,...$ to represent categories. Letters $\LL$, $\RR$ and $\DD$ will be kept to represent the Green's relations.

\section{The algebraic theory of skew lattices}

Algebraic theories have been studied under Category Theory generalizing many of the results brought to us by Universal Algebra. In this section we will approach skew lattices as algebraic categories describing the theory of skew lattices, and derive some results from it. For the reader that is familiar with these matters please skip the preliminaries on algebraic categories mentioned here. For a more detailed study on algebraic theories in general please read \cite{Bo94}.  \medskip

A general approach to algebraic structures such as groups or lattices characterize these structures by axiomatizations which involve only equations and operations that must be defined everywhere. The nullary operations are regarded as constants. The \textit{signature} $\sum $ is a family of $k$-ary operations $\set{\sum_k}_{k\in \N }$ where the elements of $\sum_0$ are the constants and the terms of $\sum $ are expressions constructed inductively by the following rules:

\begin{itemize}
\item[] variables $x, y, z, ...$ are terms ;
\item[] if $\langle t_1, ..., t_k \rangle $ is a $k$-tuple of terms and $f\in \sum_k$ then $f\langle t_1, ..., t_k \rangle $ is a term.
\end{itemize}

An \textit{algebraic theory} $\textbf{A}=(\sum , \xi)$ is given by a signature $\sum $ and a set $\xi$ of axioms which are equations between terms\footnote{Algebraic theories are also called equational theories or Lawvere theories.}.

Equivalent algebraic theories are algebraic theories that have the same signature and axioms that one can deduct from the other. The \textit{algebraic theory of skew lattices}, denoted here by $\Sl ^T$, is given by the same signature as the signature as for the algebraic category of lattices, $\set{\wedge, \vee}$, and the following axioms:

\begin{itemize}
\item [$S_1.$] $x\wedge(y\wedge z)=(x\wedge y)\wedge z$  
\item [$S_2.$] $x\vee (y\vee z)=(x\vee y)\vee z$
\item [$S_3.$] $(y\wedge x)\vee x=x$
\item [$S_4.$] $x\wedge (x\vee y)=x$
\item [$S_5.$] $(y\vee x)\wedge x =x$
\item [$S_6.$] $x\vee (x\wedge y)=x$
\end{itemize}

$S_1$ and $S_2$ express associativity which brings independence of order to the operations while the absorption laws $S_2$ to $S_6$ describe the way how both operations relate to each other. Idempotency follows from these axioms: $x\wedge x = x\wedge (x\vee (x\wedge y)) =x = ((y\vee x)\wedge x)\vee x=x\vee x$ and, similarly, $x\vee x=x$ \cite{Le89}. Hence, $S_1$ to $S_6$ are enough to define the theory of skew lattices, $\Sl^T $. Non of these axioms express the existence of a constant and both of the operations are defined everywhere. The next results show us the important role in the choice of the absorption laws. If we have chosen all the absorption laws we would get commutativity and, therefore, lattices. \smallskip

Lattices are skew lattices that satisfy the commutativity axiom for both operations \cite{Le89}, that is,

\begin{itemize}
\item [$S_7.$] $x\wedge y\approx y\wedge x$  
\item [$S_8.$] $x\vee y\approx y\vee x$
\end{itemize}

The \emph{center of a skew lattice} \textbf{S} is defined by the set $Z(S)=\set{x\in X: x\wedge y \approx y\wedge x \text{  and  } x\vee y \approx  y\vee x}$. \smallskip

In \cite{Le02} Leech defines the \emph{algebraic theory of skew* lattices} with the same signature as the algebraic theory of skew lattices, axioms $S_{1}$ and $S_{2}$ and the following others:

\begin{itemize}
\item [$S_9.$] $(x\wedge y)\vee x \approx x$
\item [$S_{10}.$] $x\wedge (y\vee x)\approx x$
\item [$S_{11}.$] $(x\vee y)\wedge x \approx x$
\item [$S_{12}.$] $x\vee (y\wedge x)\approx x$
\end{itemize} 

If we have chosen all the absorption laws we would get commutativity and, therefore, lattices. 

\begin{proposition}\cite{Le02}\label{prop_s23}

The algebraic theory of skew lattices enriched with the axioms $S_{9}$, $S_{10}$, $S_{11}$ and $S_{12}$ is equivalent to the algebraic theory of lattices.

\end{proposition}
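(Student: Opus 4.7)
The converse direction, that every lattice satisfies $S_9$--$S_{12}$, is immediate from commutativity, so the substance is to prove that the enriched axiom system forces both operations to commute. My plan is to derive commutativity of $\vee$ directly, and then obtain commutativity of $\wedge$ by duality.

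First, I would observe that the combined axiom set $\{S_1,\dots,S_6,S_9,\dots,S_{12}\}$ is closed under the formal interchange $\wedge \leftrightarrow \vee$: this swap exchanges $S_1$ with $S_2$, $S_3$ with $S_5$, $S_4$ with $S_6$, $S_9$ with $S_{11}$, and $S_{10}$ with $S_{12}$. Consequently, once commutativity of $\vee$ has been established, running the same derivation with $\wedge$ and $\vee$ interchanged yields commutativity of $\wedge$; so the whole problem reduces to showing $x \vee y = y \vee x$.

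For commutativity of $\vee$, the strategy is to evaluate $y \vee (x \vee y)$ in two different ways and then combine the outcomes. On one hand, associativity ($S_2$) rewrites $y \vee (x \vee y)$ as $(y \vee x) \vee y$. On the other hand, $S_{10}$ (read with $x$ and $y$ interchanged) supplies the identity $y \wedge (x \vee y) = y$, and feeding this into $S_3$ with pivot $x \vee y$ gives $y \vee (x \vee y) = \bigl(y \wedge (x \vee y)\bigr) \vee (x \vee y) = x \vee y$. Combining the two computations produces the intermediate equality $(y \vee x) \vee y = x \vee y$. A completely parallel move on the other side uses $S_{11}$ (with $x$ and $y$ swapped) in the form $(y \vee x) \wedge y = y$, fed into $S_6$ with pivot $y \vee x$, to yield $(y \vee x) \vee y = y \vee x$. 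Chaining the two equalities through the common expression $(y \vee x) \vee y$ delivers $x \vee y = y \vee x$.

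The conceptual point, and the only place where any real care is required, is spotting which pairs of axioms to combine: the ``swapped'' absorption laws $S_{10}$ and $S_{11}$ are precisely what provide the missing identities $y \wedge (x \vee y) = y$ and $(y \vee x) \wedge y = y$ that $S_3$--$S_6$ alone do not furnish, and these are exactly what is needed to route $x \vee y$ and $y \vee x$ through the common intermediate $(y \vee x) \vee y$. No structural tool beyond the axioms themselves (in particular no appeal to the Green's relation $\DD$ or to the coset decomposition) is necessary.
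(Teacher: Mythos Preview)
Your argument is correct. The duality observation is sound (the swap $\wedge \leftrightarrow \vee$ does permute the axiom set as you describe), and the two chains
\[
y\vee(x\vee y)\stackrel{S_2}{=}(y\vee x)\vee y,\qquad
y\vee(x\vee y)\stackrel{S_{10},\,S_3}{=}x\vee y,\qquad
(y\vee x)\vee y\stackrel{S_{11},\,S_6}{=}y\vee x
\]
are each valid instantiations of the cited axioms, giving $x\vee y=y\vee x$; commutativity of $\wedge$ then follows by the self-duality you noted.

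As for comparison: the paper does not supply its own proof of this proposition---it is stated with a citation to \cite{Le02} and no argument. So there is nothing in the present paper to compare your derivation against; your equational proof stands on its own and is exactly the kind of direct absorption-law manipulation one would expect for this result.
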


The duality principle for skew lattices follows from the well known duality principle for lattices and was referred by Jordan in \cite{Jo61} and later by Slav\'ik in \cite{Sl73a}. According to Slav\'ik, the \textit{dual term} to a term $t$ is defined by the following two rules:

\begin{itemize}
\item[1.] For all variables $x$, $D(x)=x$ ;
\item[2.] If $t_1$ and $t_2$ are terms, $D(t_1\clubsuit t_2)=D(t_1)\clubsuit ' D(t_2)$ with $\clubsuit \neq \clubsuit '$ and $\clubsuit, \clubsuit ' \in \set{\wedge, \vee}$.
\end{itemize}

For an arbitrary formula $\phi $, its \textit{dual formula} $D(\phi )$ is obtained from $\phi $ in such a way that each term occurring in $\phi $ is replaced by its dual term. The \textit{dual theory} $D(T)$ of an algebraic theory $T$ is the set of all $D(\phi )$ where $\phi $ is an element of $T$. A theory is said to be \textit{self dual} iff $D(T)=T$. That is the case of the algebraic theory of skew lattices $\Sl^T $ as well as the algebraic theory of lattices $\La^T $.

\begin{theorem}\cite{Sl73a}\label{duality}

Let $\Gamma $ be a self dual algebraic theory. Then a formula $\phi $ is a consequence of the theory $\Sl ^T$ iff the formula $D(\phi )$ is a consequence of $\Sl ^ T$.

\end{theorem}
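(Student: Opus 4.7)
The plan is to exploit the fact that the dualization operator $D$ is an involution on the term algebra which commutes with every rule of equational deduction. Once that is established, the theorem reduces to applying $D$ line-by-line to a derivation.

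First I would check the basic properties of $D$. By induction on term complexity, $D(D(t))=t$: variables are fixed, and swapping $\wedge\leftrightarrow\vee$ twice restores the original term. Extending $D$ to equations by $D(t_1\approx t_2):=D(t_1)\approx D(t_2)$, the involutive property passes to formulas. Next I would verify that $D$ commutes with substitution, namely $D(t[\sigma])=D(t)[D\circ\sigma]$ for any substitution $\sigma$ of terms for variables, again by structural induction. These two ingredients are routine and form the technical backbone of the argument.

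With this in hand I would step through each rule of equational logic (reflexivity, symmetry, transitivity, congruence under $\wedge$ and $\vee$, and substitution) and observe that applying $D$ to both sides of an instance of such a rule produces another valid instance. For instance, congruence applied to $t_1\approx t_2$ inside a larger context $u$ is mapped under $D$ to congruence applied to $D(t_1)\approx D(t_2)$ inside the context $D(u)$, which is legitimate because $D$ simply permutes the two binary operation symbols. Consequently, if $\phi$ admits a formal derivation from a set of axioms $\Xi$, then the sequence obtained by applying $D$ to every line is a formal derivation of $D(\phi)$ from $D(\Xi):=\{D(\psi):\psi\in\Xi\}$.

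The theorem is then immediate. Assume $\phi$ is a consequence of $\Sl^T$, so that it is derivable from $\{S_1,\ldots,S_6\}$. The dual derivation certifies $D(\phi)$ as a consequence of $D(\Sl^T)$; but self-duality of $\Sl^T$ (which amounts to the easily-checked equalities $D(S_1)=S_2$, $D(S_3)=S_6$, $D(S_4)=S_5$, and their symmetric counterparts) gives $D(\Sl^T)=\Sl^T$, so $D(\phi)$ is also a consequence of $\Sl^T$. The converse direction is obtained by the same argument starting from $D(\phi)$ and invoking $D(D(\phi))=\phi$. The only step requiring care is the structural induction verifying that $D$ commutes with substitution and with the congruence rule; beyond that the proof is essentially a meta-theoretic observation about self-dual equational theories.
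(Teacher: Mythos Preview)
Your argument is correct and is the standard meta-theoretic proof of the duality principle for a self-dual equational theory. Note, however, that the paper does not supply its own proof of this theorem: it is stated with a citation to Slav\'{\i}k \cite{Sl73a} and no argument is given. So there is no paper proof to compare against; your proposal simply fills in what the paper takes for granted.

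One small correction: your bookkeeping of the dual pairs among $S_1,\dots,S_6$ is off. From the definitions one has $D(S_3)=S_5$ and $D(S_4)=S_6$, not $D(S_3)=S_6$ and $D(S_4)=S_5$ as you wrote. This does not affect the argument, since all that matters is that $D$ permutes the axiom set $\{S_1,\dots,S_6\}$, but it is worth fixing before the proof is used anywhere.
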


This concept of duality was later developed by Leech in \cite{Le89} and \cite{Le02}, where he defined three dual algebras for a skew lattice $(S,\vee, \wedge)$: the \emph{horizontal dual}, $(S,\vee,\wedge )^{h}\approx(S,\vee^{h}, \wedge^{h})$ where $x\vee^{h} y\approx y\vee x$ and $x\wedge^{h} y\approx y\wedge x$; the \emph{vertical dual}, $(S,\vee,\wedge )^{v}\approx(S,\vee^{v}, \wedge^{v})$ where $x\vee^{v} y\approx x\wedge y$ and $x\wedge^{v} y\approx x\vee y$; and the \emph{double dual}, $(S,\vee,\wedge )^{d}\approx(S,\vee^{d}, \wedge^{d})$, given as $(S,\vee,\wedge )^{hv}$. Skew* lattices referenced in \cite{Le02} refer to the horizontal dual. Most of the varieties of skew lattices are closed under one if not all three dualizations.  \medskip

A skew lattice is said to be \emph{symmetric} if it satisfies 

$$x\wedge y \approx y\wedge x \text{   iff   } x\vee y\approx y\vee x.$$

The defining axioms for the \emph{theory of symmetric skew lattices} are $S_1$ to $S_6$ plus the following:

\begin{itemize}
\item [$S_{13}.$] $x\wedge y \wedge (x\vee y\vee x)\approx (x\vee y\vee x)\wedge y\wedge x$ 
\item [$S_{14}.$] $x\vee y \vee (x\wedge y\wedge x)\approx (x\wedge y\wedge x)\vee y\vee x$
\end{itemize}

Skew lattices that satisfy $S_{13}$ are called \emph{lower symmetric skew lattices} and skew lattices that satisfy $S_{14}$ are called \emph{upper symmetric skew lattices}. \medskip

The theory of \emph{right-handed} skew lattices is given by adding to the algebraic theory of skew lattices the following two axioms 

\begin{itemize}
\item [$S_{15}.$] $x\wedge y\wedge x \approx  y\wedge x$ 
\item [$S_{16}.$] $x\vee y\vee x \approx  x\vee y$
\end{itemize}

The theory of \emph{left-handed} skew lattices is defined dually by the following axioms 

\begin{itemize}
\item [$S_{17}.$] $x\wedge y\wedge x\approx  x\wedge y$ 
\item [$S_{18}.$] $x\vee y\vee x \approx  y\vee x$
\end{itemize}

Right-handed skew lattices satisfy $S_9$ and $S_{10}$ while left-handed skew lattices satisfy $S_{11}$ and $S_{12}$. Moreover, 

\begin{proposition}\cite{Le89} The theory of skew lattices that are simultaneously right-handed and left-handed is equivalent to the theory of lattices. \end{proposition}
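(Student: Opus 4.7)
The plan is to show that the two pairs of axioms force full commutativity of both $\wedge$ and $\vee$, after which the skew lattice axioms $S_1$--$S_6$ together with commutativity give precisely the theory of lattices.

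First I would isolate the consequences of being simultaneously right- and left-handed for the meet operation. The right-handed axiom $S_{15}$ gives $x\wedge y\wedge x\approx y\wedge x$, while the left-handed axiom $S_{17}$ gives $x\wedge y\wedge x\approx x\wedge y$. Chaining these two identities through their common left-hand side yields
\[
x\wedge y \approx x\wedge y\wedge x \approx y\wedge x,
\]
which is exactly $S_7$. The same two-line argument applied to $S_{16}$ and $S_{18}$ gives
\[
x\vee y \approx x\vee y\vee x \approx y\vee x,
\]
which is $S_8$.

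At this point the algebra satisfies $S_1$--$S_6$ plus $S_7$ and $S_8$, which is exactly the algebraic theory of lattices as recorded just before Proposition~\ref{prop_s23}. For the converse direction, a lattice satisfies the handedness axioms trivially: by associativity and commutativity of $\wedge$, one has $x\wedge y\wedge x=x\wedge x\wedge y=x\wedge y=y\wedge x$, so both $S_{15}$ and $S_{17}$ hold, and the dual computation handles $S_{16}$ and $S_{18}$. Hence the two theories have the same consequences and are equivalent.

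There is essentially no obstacle here; the only thing worth emphasising is that the equivalence of algebraic theories is checked in both directions, but since each direction reduces to a single substitution followed by associativity, the verification is immediate. The proof thus amounts to the observation that the right-handed and left-handed axioms are already \emph{mutually dual} in the sense that imposing both collapses the middle term $x\wedge y\wedge x$ (respectively $x\vee y\vee x$) to both of its two non-trivial reductions, forcing equality.
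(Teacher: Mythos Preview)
Your argument is correct and is exactly the natural direct verification: combining $S_{15}$ with $S_{17}$ forces $S_7$, combining $S_{16}$ with $S_{18}$ forces $S_8$, and conversely commutativity trivially yields all four handedness identities. The paper itself does not supply a proof of this proposition---it is simply cited from \cite{Le89}---so there is nothing to compare against, but your reasoning matches the standard one-line derivation and is complete as written.
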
 

In fact, these one-sided versions of a skew lattice introduce another notion of duality distinct to the duality stated in Theorem \ref{duality}. It will be further discussed in the end of the next section.\smallskip

As it happens in the theory of lattices, (the generalization of) the property of distributivity is going to have an important role in the study of the theory of skew lattices. In the next paragraphs we are dedicating some of our attention to it.\smallskip

The defining axioms for the \emph{theory of distributive skew lattices}, as established by Leech in \cite{Le89} are $S_1$ to $S_6$ plus the following:

\begin{itemize}
\item [$S_{19}.$] $x\wedge(y\vee z)\wedge x\approx (x\wedge y\wedge x)\vee (x\wedge z\wedge x)$ 
\item [$S_{20}.$] $x\vee (y\wedge z)\vee x\approx (x\vee y\vee x)\wedge (x\vee z\vee x)$
\end{itemize}

Spinks named these algebras as \textit{middle distributive skew lattices} and confirmed the independency of the axioms $S_{19}$ and $S_{20}$ in \cite{Sp00} presenting a nine-element counter-example obtained by the program SEM, a system for enumerating finite models. Moreover, he showed that the middle distributivity identities are equivalent in the presence of symmetry. Later in \cite{Ka06}, Cvetko-Vah gave a non computational proof of this same equivalence, stating that:

\begin{proposition}\cite{Ka06}
For any skew lattice \textbf{S}, the identities $S_{13}$ and $S_{19}$ imply $S_{20}$; and the identities $S_{14}$ and $S_{20}$ imply $S_{19}$.
\end{proposition}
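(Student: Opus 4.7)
The two implications in the statement are vertical duals of one another: the involution $(\wedge,\vee)\mapsto(\vee,\wedge)$ permutes $S_1$--$S_6$ in pairs and swaps $S_{13}\leftrightarrow S_{14}$ as well as $S_{19}\leftrightarrow S_{20}$. Hence the theory $\Sl^T\cup\{S_{13},S_{19}\}$ dualizes to $\Sl^T\cup\{S_{14},S_{20}\}$, and by Theorem~\ref{duality} any derivation of $S_{20}$ from the former mirrors step-by-step to a derivation of $S_{19}$ from the latter. It therefore suffices to prove the first implication.

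Fix $x,y,z$ in a skew lattice $\mathbf{S}$ satisfying $S_{13}$ and $S_{19}$, and set
\[
a\;=\;x\vee(y\wedge z)\vee x,\qquad b\;=\;(x\vee y\vee x)\wedge(x\vee z\vee x);
\]
the goal is $a=b$. Stage one is to observe that $a\,\DD\,b$. By Leech's first decomposition the quotient $\mathbf{S}/\DD$ is an ordinary lattice, and $S_{19}$ descends to the usual meet-distributive law $\bar x\wedge(\bar y\vee\bar z)=(\bar x\wedge\bar y)\vee(\bar x\wedge\bar z)$ there. Since meet- and join-distributivity coincide in lattices, the dual identity also holds in $\mathbf{S}/\DD$, which is exactly the assertion $\bar a=\bar b$, so $a$ and $b$ lie in the same $\DD$-class.

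Stage two is the delicate step: upgrade $a\,\DD\,b$ to an actual equality inside the rectangular $\DD$-class containing $a$ and $b$. My plan is to expand $b$ by repeated use of $S_{19}$ with the outer variable substituted by terms of the form $x\vee w\vee x$, collapsing the result via the absorption laws $S_3$--$S_6$; lower symmetry $S_{13}$ is then used to commute the factor $y\wedge z$ past $x$ inside the bracketed products, producing the alignment of left/right coset positioning that $S_{19}$ alone does not determine. The principal obstacle lies exactly here: because the operations inside a rectangular $\DD$-class are non-commutative, $S_{19}$ on its own leaves the positioning of the inner factors free, and it is $S_{13}$ that pins it down. Once this alignment is achieved, the equality $a=b$ follows by absorption and idempotency, and the dual implication $S_{14}\wedge S_{20}\Rightarrow S_{19}$ is read off by applying the vertical dualization to every step.
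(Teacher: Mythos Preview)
The paper does not give its own proof of this proposition; it is quoted verbatim from \cite{Ka06} without argument, so there is nothing in the paper to compare your attempt against.

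On the merits of your proposal: the duality reduction in your opening paragraph is sound, and Stage one (that $a\,\DD\,b$ because $\mathbf S/\DD$ is a distributive lattice) is correct and standard. The gap is that Stage two is not a proof but a statement of intent. Phrases such as ``My plan is to expand $b$ by repeated use of $S_{19}$'' and ``Once this alignment is achieved, the equality $a=b$ follows'' describe a strategy without executing a single line of it. This is precisely the non-trivial part of the result: the $\DD$-class equality is cheap, and all of the content of Cvetko-Vah's argument lies in the explicit equational computation inside the class, which you have not supplied.

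There is also a conceptual slip in how you describe the role of $S_{13}$. That axiom is the identity encoding of lower symmetry, i.e.\ of the implication ``$x\vee y=y\vee x$ implies $x\wedge y=y\wedge x$''; it is not a device that lets one ``commute the factor $y\wedge z$ past $x$'' at will. To invoke $S_{13}$ you must first exhibit a pair that already $\vee$-commutes, and only then conclude $\wedge$-commutativity of that same pair. Your sketch does not identify any such pair, so as written it is not clear that $S_{13}$ can be applied in the way you suggest. You should either carry out the computation explicitly (as in \cite{Ka06}) or isolate the specific $\vee$-commuting pairs on which $S_{13}$ is to act.
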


In order to explore a bit more the different concepts of distributivity available in the literature, consider the following axioms:

\begin{itemize}
\item [$S_{21}.$] $x\wedge(y\vee z)\approx (x\wedge y)\vee (x\wedge z)$ 
\item [$S_{22}.$] $(x\vee y)\wedge z\approx (x\wedge z)\vee (y\wedge z)$
\item [$S_{23}.$] $x\vee(y\wedge z)\approx (x\vee y)\wedge (x\vee z)$ 
\item [$S_{24}.$] $(x\wedge y)\vee z\approx (x\vee z)\wedge (y\vee z)$
\end{itemize}

Observe that in a right handed skew lattice, distributivity reduces to satisfying $S_{22}$ and $S_{23}$ while, in a left handed skew lattice this same property reduces to satisfying $S_{21}$ and $S_{24}$.
Skew lattices satisfying $S_{21}$ and $S_{22}$ are called $\wedge $\emph{-distributive} while skew lattices satisfying $S_{23}$ and $S_{24}$ are called $\vee $\emph{-distributive}. 
\emph{Bidistributivity} is determined by the axioms $S_1$ to $S_6$ and $S_{21}$ to $S_{24}$. Skew lattices satisfying $S_{21}$ and $S_{22}$ are named $\wedge $\emph{-distributive skew lattices} while skew lattices satisfying $S_{23}$ and $S_{24}$ are named $\vee $\emph{-distributive skew lattices}. Note that either of the middle distributivity identities together with the axioms $S_{15}$ to $S_{18}$ imply the axioms of bidistributivity. 
Distributivity in skew lattices was further studied in \cite{Le92}, \cite{Ka08} and \cite{Le11}.

\begin{proposition}\cite{Le11}\label{meetdist}
The algebraic theory of left-handed skew lattices satisfying $S_{19}$ is equivalent to the theory with the same signature defined by the axioms $S_1$ to $S_6$, $S_{17}$ and $S_{21}$. Analogously, the axioms $S_1$ to $S_6$, $S_{15}$ and $S_{22}$ characterize the algebraic theory of right-handed skew lattices satisfying $S_{19}$. Dually, the axioms $S_1$ to $S_6$, $S_{18}$ and $S_{23}$ determine the algebraic theory of left-handed skew lattices satisfying $S_{20}$ and the axioms $S_1$ to $S_6$, $S_{16}$ and $S_{24}$ determine the algebraic theory of right-handed skew lattices satisfying $S_{20}$.
\end{proposition}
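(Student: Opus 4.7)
The plan is to prove each of the four equivalences by direct substitution, exploiting the fact that the one-sided identities $S_{17}$ (respectively $S_{15}$) allow one to eliminate the outer factor in every expression of the form $x\wedge t\wedge x$ by dropping either the trailing or the leading occurrence of $x$, and analogously $S_{18}$ and $S_{16}$ act on expressions $x\vee t\vee x$. Through these substitutions the middle distributivity $S_{19}$ collapses to a one-sided meet-distributivity, and $S_{20}$ collapses to a one-sided join-distributivity.

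For the first claim, I would assume $S_1$ through $S_6$ together with $S_{17}$, $S_{18}$, and $S_{19}$. Substituting $y\vee z$ for $y$ in $S_{17}$ yields $x\wedge(y\vee z)\wedge x = x\wedge(y\vee z)$, while $S_{17}$ itself gives $x\wedge y\wedge x = x\wedge y$ and $x\wedge z\wedge x = x\wedge z$. Plugging these three equalities into the two sides of $S_{19}$ produces exactly $S_{21}$. Conversely, starting from $S_1$ through $S_6$, $S_{17}$, and $S_{21}$, the same three equalities remain available, and applying them in reverse converts $S_{21}$ back into $S_{19}$; to recover the full axiomatization of left-handedness one must further derive $S_{18}$ from $S_{17}$, which is a classical result from \cite{Le89} where both identities are shown to express triviality of the same Green relation on the underlying band. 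The second (right-handed) claim is analogous, with $S_{15}$ eliminating the leading $x\wedge$ and $S_{19}$ reducing directly to $S_{22}$.

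For the two dual claims involving $S_{20}$, I would repeat the computation verbatim with $\wedge$ and $\vee$ interchanged: $S_{18}$ or $S_{16}$ simplifies the expressions $x\vee t\vee x$ appearing in $S_{20}$, and the result collapses to the appropriate one-sided join-distributivity. Alternatively, both claims follow at once from the first two by applying Theorem \ref{duality}: the self-duality of $\Sl^T$ exchanges $\wedge$ with $\vee$, swaps handedness, and sends $S_{19}$ to $S_{20}$. The only genuinely nontrivial input in the whole argument is the classical equivalence $S_{17}\iff S_{18}$ (and dually $S_{15}\iff S_{16}$) in skew lattices; without it, the right-to-left direction would deliver only the corresponding one-sided identity rather than full handedness, and this is where the proof would otherwise become an obstacle.
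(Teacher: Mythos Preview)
The paper does not supply its own proof of this proposition; it is stated with a citation to \cite{Le11} and left without argument. Your approach is the natural one and is correct for the first two equivalences: the one-sided identity $S_{17}$ (resp.\ $S_{15}$) strips the outer $x$ from every term $x\wedge t\wedge x$ in $S_{19}$, collapsing it to $S_{21}$ (resp.\ $S_{22}$), and the converse runs the same substitutions backwards together with the classical equivalence $S_{17}\Leftrightarrow S_{18}$ (resp.\ $S_{15}\Leftrightarrow S_{16}$), which you rightly flag as the only nontrivial input.

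One caution on the dual pair: if you actually carry out the computation rather than invoking duality abstractly, applying $S_{18}$ to $S_{20}$ yields $(y\wedge z)\vee x=(y\vee x)\wedge(z\vee x)$, which is $S_{24}$, not $S_{23}$; likewise $S_{16}$ applied to $S_{20}$ gives $S_{23}$, not $S_{24}$. So either the labels $S_{23}$ and $S_{24}$ are transposed in the statement as printed, or an additional argument is needed. Your phrase ``the appropriate one-sided join-distributivity'' glosses over this; in a written-out proof you should display the identity you actually obtain and reconcile it with the numbered axiom claimed.
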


\begin{proposition} 
The algebraic theory of left-handed distributive skew lattices is equivalent to the theory with the same signature defined by the axioms $S_1$ to $S_4$, $S_9$, $S_{10}$, $S_{21}$ and $S_{24}$. Analogously, the axioms $S_1$ and $S_2$, $S_{5}$, $S_{6}$, $S_{11}$, $S_{12}$, $S_{22}$ and $S_{23}$ constitute an algebraic theory equivalent to the theory of right-handed distributive skew lattices.
\end{proposition}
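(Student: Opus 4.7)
My plan is to establish the equivalence in both directions, treating only the left-handed case since the right-handed statement is obtained by horizontal dualization. For the forward direction, suppose \textbf{S} is a left-handed distributive skew lattice (satisfying $S_1$--$S_6$, $S_{17}$, $S_{18}$, $S_{19}$, $S_{20}$): then $S_1$--$S_4$ are already present, $S_9$ is derived by rewriting $(x\wedge y)\vee x = (x\wedge y\wedge x)\vee x$ via $S_{17}$ and applying $S_3$, $S_{10}$ is derived analogously from $S_{18}$ and $S_4$, and the one-sided distributive identities $S_{21}$ and $S_{24}$ follow immediately from Proposition \ref{meetdist}.

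For the converse, working only from $S_1$--$S_4$, $S_9$, $S_{10}$, $S_{21}$, $S_{24}$, I would proceed in four stages. First, derive $\vee$-idempotency by setting $y := x\vee x$ in $S_9$ and using $S_4$ to collapse $x\wedge(x\vee x)$ to $x$; dually $\wedge$-idempotency follows from $S_{10}$ and $S_3$. Second, recover the missing absorptions: $S_{24}$ with $z := x$, together with $\vee$-idempotency and $S_3$, yields
$$(y\vee x)\wedge x = (y\vee x)\wedge(x\vee x) = (y\wedge x)\vee x = x,$$
which is $S_5$, and symmetrically $S_{21}$ with $y := x$ combined with $\wedge$-idempotency and $S_4$ gives $S_6$. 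Third, derive left-handedness: for $S_{17}$, apply $S_4$ with the substitution $a := x\wedge y$ and $b := x$, obtaining $(x\wedge y)\wedge((x\wedge y)\vee x) = x\wedge y$, then substitute $S_9$ inside and use associativity $S_1$; $S_{18}$ is analogous via $S_3$ and $S_{10}$. Finally, the middle distributivity axioms $S_{19}$ and $S_{20}$ follow from $S_{21}$ and $S_{24}$ once $S_{17}$ and $S_{18}$ are available, since both sides of $S_{19}$ collapse via $S_{17}$ to $x\wedge(y\vee z) = (x\wedge y)\vee(x\wedge z)$, and dually for $S_{20}$.

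The main obstacle I anticipate is the second stage: deriving the \emph{wrong-sided} absorption laws $S_5$ and $S_6$ without the corresponding one-sided absorptions forces the distributive identities $S_{21}$ and $S_{24}$ to do genuine structural work, and proving idempotency at the outset looks subtly circular because the standard derivation recalled earlier in the paper relies on having all of $S_3$--$S_6$ simultaneously. Once these footholds are secured, the remaining stages are essentially bookkeeping, and the right-handed half of the proposition is obtained by dualizing each step horizontally, exchanging the roles of $S_9, S_{10}, S_{21}, S_{24}$ with $S_{11}, S_{12}, S_{22}, S_{23}$.
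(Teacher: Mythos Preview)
Your proposal is correct and follows essentially the same route as the paper: both directions use the same key reductions ($S_9,S_{10}\leftrightarrow S_{17},S_{18}$ via $S_3,S_4$; $S_{21},S_{24}\leftrightarrow S_{19},S_{20}$ via left-handedness; and $S_5,S_6$ recovered from $S_{24},S_{21}$ together with idempotency and $S_3,S_4$). Your treatment is in fact slightly more careful than the paper's, since you explicitly derive idempotency from $S_9$+$S_4$ and $S_{10}$+$S_3$ before invoking it, whereas the paper uses idempotency without justifying it from the reduced axiom list; your worry about circularity is well-founded but your resolution is sound.
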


\begin{proof}

Let us show that from the axioms  $S_1$ to $S_6$, $S_{17}$, $S_{18}$, $S_{19}$ and $S_{20}$ we can deduct the axioms $S_1$ to $S_4$, $S_9$, $S_{10}$, $S_{21}$ and $S_{24}$. 

$S_{21}$ and $S_{24}$ are directly derived from $S_{17}$, $S_{18}$, $S_{19}$ and $S_{20}$:  $x\wedge(y\vee z)\approx x\wedge(y\vee z)\wedge x\approx (x\wedge y\wedge x)\vee (x\wedge z\wedge x)\approx (x\wedge y)\vee (x\wedge z)$ and  $(y\wedge z)\vee x\approx x\vee (y\wedge z)\vee x\approx (x\vee y\vee x)\wedge (x\vee z\vee x)\approx (y\vee x)\wedge (z\vee x)$.

$S_{9}$ is derivable from $S_{17}$ and $S_{3}$, respectively: $(x\wedge y)\vee x\approx(x\wedge y\wedge x)\vee x\approx x$. Similarly, $S_{10}$ is derivable from $S_{18}$ and $S_{4}$, respectively: $x\wedge (y\vee x)\approx x\wedge (x\vee y\vee x)\approx x$.

Conversely, assume the axioms $S_1$ to $S_4$, $S_9$, $S_{10}$, $S_{21}$ and $S_{24}$. 

$S_{9}$ and $S_{4}$, respectively, are enough to deduct $S_{17}$: $x\wedge y\wedge x\approx x\wedge y\wedge ((x\wedge y)\vee x)\approx x\wedge y$. 
Analogously, $S_{10}$ and $S_{3}$ are enough to deduct $S_{18}$.  

$S_{17}$ and $S_{21}$, respectively, are enough to deduct $S_{19}$: $x\wedge (y\vee z)\wedge x\approx x\wedge (y\vee z) \approx (x\wedge y)\vee (x\wedge z)\approx(x\wedge y\wedge x)\vee (x\wedge z\wedge x)$. 
Analogously, $S_{18}$ and $S_{22}$ are enough to deduct $S_{20}$.   

$S_{5}$ is derived from idempotency, $S_{24}$ and $S_{3}$, respectively: $(y\vee x)\wedge x\approx (y\vee x)\wedge (x\vee x)\approx (y\wedge x)\vee x\approx x$.
Analogously, $S_{21}$ and $S_{4}$ are enough to deduct $S_{6}$.

The second part of the result is proved similarly.  

\end{proof}

Another relevant property, extensively studied in skew lattices is \textit{normality}. Consider the following axioms:

\begin{itemize}
\item [$S_{25}.$] $x\wedge y\wedge z\wedge w\approx x\wedge z\wedge y\wedge w$ 
\item [$S_{26}.$] $x\vee y\vee z\vee w\approx x\vee z\vee y\vee w$
\end{itemize}
\smallskip

A skew lattice $\mathbf S$ is said to be \emph{normal} if it satisfies $S_{25}$. Dually, skew lattices that satisfy $S_{26}$ are named \emph{conormal}. The center of a skew lattice is always a normal skew lattice \cite{Le92}. Moreover, all sub lattices of a skew lattice are normal skew lattices. Normal skew lattices were studied in \cite{Le92} and are sometimes cited as \textit{local lattices} \cite{Le93} or as \textit{mid commutative skew lattices} \cite{Le92}. \smallskip

A skew Boolean algebra is an algebra $\textbf{S }= (S ; \vee, \wedge, \backslash, 0)$ of type $<2, 2, 2, 0>$ such that $(S ; \vee, \wedge, 0)$ is a distributive, normal, symmetric skew lattice with $0$, and $\backslash $ is a binary operation on $\textbf{S}$ satisfying $(x \wedge y \wedge x) \vee (x\backslash y) = x$ and $(x \wedge y \wedge x) \wedge (x\backslash y) = 0$. These algebras, together with skew lattices in unitary rings are the most studied examples of skew lattices for over the past 20 years. Skew Boolean algebras form a variety of algebras \cite{Le90}.  \smallskip

When Pascual Jordan introduced skew lattices in \cite{Jo49}, he chose the axioms $S_1$, $S_2$, $S_9$ and $S_{10}$ that brought him to a weaker version of the algebra that here we present as skew lattice, still holding idempotency \cite{Jo61}. Slav\'ik in \cite{Sl73a} used $S_1$ to $S_4$, $S_9$ and $S_{10}$  and Cornish followed this work using the $\wedge $-bidistributivity axioms $S_{21}$ and $S_{24}$ (cf. \cite{Co80} , Theor 3.4) in order to define Boolean skew algebra, a Boolean version of skew lattice. These brought him to the left-handed version of the skew Boolean algebra that later Leech would state in \cite{Le90}. Leech's skew lattices, firstly presented in \cite{Le89}, are the ones we study through this paper. 

The later approach to algebraic theories is rather syntactic by nature so we'll step towards a representation-free notion of algebraic theories using the language of  category theory, based on the earlier notion of algebraic theory of a skew lattice. We can define the \textit{algebraic category of skew lattices}, denoted by $\Sl $ as follows: 

\begin{itemize}
\item[] as \textit{objects} we take \textit{contexts}, that is, sequences of variables $[x_1,x_2,...,x_n]$, for $n\geq 0$;

\item[] A \textit{morphism} from $[x_1,x_2,...,x_m]$ to $[x_1,x_2,...,x_n]$ is an $n$-tuple $\langle t_1,t_2,...,t_n \rangle$, where each $t_k$ is a term of the theory whose variables are among $x_1,...,x_m$. Every such term is built inductively. The equality of two such morphisms $\langle t_1,...,t_n \rangle$ and $\langle u_1,...,u_n \rangle$ holds exactly when the axioms $S_1$ to $S_6$ imply $t_k=u_k$ for each $k=1,...,n$. In other words, morphisms are equivalence classes of terms, where two terms are equivalent when the theory proves them to be equal. 

\item[] The \textit{composition of morphisms} $\langle t_1,...,t_m \rangle :[x_1,...,x_k]\rightarrow [x_1,...,x_m]$ and $\langle u_1,...,u_n \rangle :[x_1,...,x_m]\rightarrow [x_1,...,x_n]$ is the morphism $\langle v_1,...,v_n \rangle $ whose $i$-th component is obtained by simultaneously substituting in $u_i$ the terms $t_1,...,t_m$ for the variables $x_1,...,x_m$: $v_i=u_i[t_1,...,t_m/x_1,...,x_m]$, for $1\leq i\leq n$.

\item[] The \textit{identity morphism} on $[x_1,...,x_n]$ is $\langle x_1,...,x_n \rangle $. 
\end{itemize}\medskip

The object $[x_1,..., x_{n+m}]$ is the product of $[x_1,...,x_n]$ and $[x_1,...,x_m]$ so that all finite products exist. Furthermore, every object is a product of finitely many instances of the object $[x_1]$. \textit{Models} of algebraic theories are just finite product preserving functors \cite{Bo94}. A finite product preserving functor is then determined, up to natural isomorphism, by its action on the context $[x_1]$ and the terms representing the basic operations $\vee $ and $\wedge $. This suggests that the category of the models of the algebraic theory $\Sl^T $ in $Set$ is equivalent to the algebraic category of the skew lattices, $\Sl $, provided both categories have the same notion of morphisms. In some sense algebraic categories are the Category Theory approach to the well know Universal Algebra concept of algebraic variety, a generalization of this \textit{finitary} notion. On what follows we will also use the notation $\La $ to represent \textit{the algebraic category of lattices} that is well known and has a similar construction.\smallskip

All algebraic categories are regular categories \cite{Bo94}. Moreover, this algebraic category approach to skew lattices provide us with general results valid in this context as follows:  

\begin{theorem} (Morphism Factorization Theorem)\label{homofact}
In the algebraic category of skew lattices, every morphism $f:A\rightarrow B$ factors as a composition of a regular epimorphism $q$ and a monomorphism $m$. Moreover, the factorization is unique up to isomorphism.
\end{theorem}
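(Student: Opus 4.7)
The plan is to derive the statement from the general fact, recalled immediately before it, that every algebraic category is regular, and to carry out the standard image factorization explicitly inside $\Sl$. In a regular category such factorizations always exist and are unique up to isomorphism, so the work is to exhibit the factorization concretely for a morphism of skew lattices and identify its constituents with those produced by the regular structure.

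Given $f:A\rightarrow B$ in $\Sl$, I would first form the kernel pair $p_1,p_2 : A\times_B A\rightarrow A$, which exists because $\Sl$, being algebraic, has all finite limits. Then let $q:A\rightarrow Q$ be the coequalizer of $p_1$ and $p_2$; this exists as well since a variety of algebras admits all coequalizers, built as the quotient of $A$ by the congruence generated by the kernel pair. Since $f\circ p_1 = f\circ p_2$ by definition of the kernel pair, the universal property of $q$ produces a unique $m:Q\rightarrow B$ with $f=m\circ q$. By its construction $q$ is the coequalizer of its own kernel pair, hence a regular epimorphism.

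The main technical step is to show that $m$ is a monomorphism, and this is where I expect the genuine work to lie. Using the explicit description of coequalizers in a variety, the quotient $Q$ is precisely $A$ modulo the congruence $\set{(a,a')\in A\times A : f(a)=f(a')}$, and the induced $m$ is the canonical bijection from this quotient onto the image subalgebra $f(A)\subseteq B$ composed with the inclusion $f(A)\hookrightarrow B$. In particular $m$ is injective on underlying sets, and since an injective homomorphism in an algebraic category is a monomorphism, $m$ is monic in $\Sl$.

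For uniqueness, suppose $f = m'\circ q'$ is another factorization with $q'$ a regular epimorphism and $m'$ a monomorphism. Since $m'$ is monic the two composites $m'\circ q'\circ p_1$ and $m'\circ q'\circ p_2$ agree, hence $q'$ coequalizes $p_1$ and $p_2$; by the universal property of $q$ there is a unique $u:Q\rightarrow Q'$ with $u\circ q = q'$ and $m'\circ u = m$. Symmetrically, $q'$ is the coequalizer of its own kernel pair, which coincides with that of $f$ because $m'$ is monic, so there is a unique $v:Q'\rightarrow Q$ inverse to $u$. Therefore the two factorizations differ by the isomorphism $u:Q\rightarrow Q'$, which is the required uniqueness statement.
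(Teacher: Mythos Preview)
Your argument is correct. The paper itself does not prove this theorem: it simply records, just before the statement, that ``All algebraic categories are regular categories \cite{Bo94}'' and then states the factorization theorem as a consequence, with no further justification. Your proposal spells out the content of that citation by constructing the kernel pair, taking its coequalizer to obtain the regular epimorphism $q$, identifying the quotient with $A$ modulo the congruence $\ker f$ so that the induced $m$ is injective and hence monic, and then running the standard uniqueness argument via the universal property of the coequalizer. The only point worth making explicit is the small lemma you invoke implicitly in the uniqueness step: a regular epimorphism in a category possessing kernel pairs is automatically the coequalizer of its own kernel pair, which is what lets you conclude that $q'$ is a coequalizer of $(p_1,p_2)$ and hence obtain the inverse comparison map. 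With that standard fact granted, your proof is complete and is strictly more informative than the paper's bare citation.
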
 

This and other results following from the fact that skew lattices can be seen as algebraic categories reveal the importance of this approach.

\section{The decomposition theorems}

Green's relations $\LL $, $\RR $ and $\DD $ have proved to have an important role in the further development of skew lattice theory. On the following section we approach $\DD $ within a categorical perspective and emphasize its important role on the study of these algebras. This discussion will follow argumenting the relevance of its contribution due to the fact that it determines such a partition on the skew lattice that each of its blocks are its maximal "anticommutative" subalgebras. \medskip

Recall the well known \textit{forgetful functor} $\UU$ between the algebraic category of abelian groups $\G_{ab}$ and the algebraic category of groups $\G $ that forgets the axiom of commutativity and brings all abelian groups to the wider algebraic category of groups. It can also be defined a functor $\AA $, the \textit{abelianization functor}, that will associate to any group $G$ an abelian group, $G/[G,G]$ while morphisms are not affected by commutators. This example of adjoint functors $\AA \dashv \UU $ gives us an intuitive idea of how we can approach the problem of relating the algebraic categories $\La $ and $\Sl $. \medskip

For the reader familiar with categorical treatment of algebraic theories the following result is more of an observation than a real theorem. Even though not being original is quite relevant within the context of this paper and shall be stated as a theorem. 

\begin{theorem}\label{te_fundamental}

Let $\A $ and $\A' $ be algebraic structures such that $ \xi_{\A'} = \xi_{\A} \cup E,$ where $\xi _{\A}$ is the set of axioms of the algebraic structure $\A$. Then the forgetful functor $\UU :\A ' \longrightarrow \A $ has a left adjoint: the functor  $\FF :\A \longrightarrow \A '$ that assigns to each algebra $\textbf{A}$ in $\A $ the quotient algebra $\textbf{A}/\sim $, where $\sim $ is the congruence determined by the set of axioms $E$. 

\end{theorem}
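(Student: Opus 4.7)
The plan is to construct $\FF$ explicitly as a quotient functor and then to verify the natural bijection $\mathrm{Hom}_{\A'}(\FF(\mathbf{A}),\mathbf{B})\cong \mathrm{Hom}_{\A}(\mathbf{A},\UU(\mathbf{B}))$. For each algebra $\mathbf{A}$ of type $\A$, I let $\sim_\mathbf{A}$ denote the congruence on $\mathbf{A}$ generated by the set of all pairs $(s^\mathbf{A}(\bar a),t^\mathbf{A}(\bar a))$, where $(s\approx t)\in E$ and $\bar a$ ranges over tuples in $\mathbf{A}$ of the appropriate arity, and $s^\mathbf{A}, t^\mathbf{A}$ denote the term operations induced in $\mathbf{A}$. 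Since the intersection of any family of congruences is a congruence, $\sim_\mathbf{A}$ is well defined as the smallest such relation. By construction $\mathbf{A}/\sim_\mathbf{A}$ inherits every axiom in $\xi_\A$ from $\mathbf{A}$ and satisfies every axiom in $E$, hence belongs to $\A'$; set $\FF(\mathbf{A}):=\mathbf{A}/\sim_\mathbf{A}$.

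Next, I define $\FF$ on morphisms. Given $f:\mathbf{A}\to \mathbf{A}''$ in $\A$, the composition $\pi_{\mathbf{A}''}\circ f:\mathbf{A}\to \mathbf{A}''/\sim_{\mathbf{A}''}$ identifies every generating pair of $\sim_\mathbf{A}$, because $f(s^\mathbf{A}(\bar a))=s^{\mathbf{A}''}(f(\bar a))\sim_{\mathbf{A}''} t^{\mathbf{A}''}(f(\bar a))=f(t^\mathbf{A}(\bar a))$. Thus $\sim_\mathbf{A}$ lies inside the kernel of $\pi_{\mathbf{A}''}\circ f$, which therefore factors uniquely through the quotient map $\eta_\mathbf{A}:\mathbf{A}\to \FF(\mathbf{A})$, yielding $\FF(f):\FF(\mathbf{A})\to \FF(\mathbf{A}'')$. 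Functoriality of $\FF$ follows from the uniqueness of such factorizations.

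To establish $\FF\dashv\UU$ I take the family of quotient maps $\eta_\mathbf{A}:\mathbf{A}\to \UU(\FF(\mathbf{A}))$ as the unit of the adjunction. The key step is the universal property: for every $\mathbf{B}\in \A'$ and every homomorphism $g:\mathbf{A}\to \UU(\mathbf{B})$ in $\A$ there is a unique $\tilde g:\FF(\mathbf{A})\to \mathbf{B}$ in $\A'$ with $\UU(\tilde g)\circ \eta_\mathbf{A}=g$. The crux is that $\mathbf{B}$ satisfies every axiom of $E$, so for each generating pair of $\sim_\mathbf{A}$
$$g(s^\mathbf{A}(\bar a))=s^\mathbf{B}(g(\bar a))=t^\mathbf{B}(g(\bar a))=g(t^\mathbf{A}(\bar a)).$$
Hence $\sim_\mathbf{A}$ is contained in the kernel of $g$, and the rule $\tilde g([a]):=g(a)$ defines the required factorization; uniqueness follows from surjectivity of $\eta_\mathbf{A}$. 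Naturality in both arguments is routine, closing the adjunction.

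The main obstacle is essentially bookkeeping rather than conceptual: one must confirm that the closure of the generating set under the operations and the equivalence-relation axioms yields a single well-defined congruence, and that the axioms of $\xi_\A$, built only from operations shared by $\A$ and $\A'$, descend to the quotient. Both are standard universal-algebra facts (the lattice of congruences is complete, and the satisfaction of equational axioms is preserved under quotients), so no genuinely deep ingredient is needed; the theorem repackages them in the language of adjoint functors, precisely in the spirit of the abelianization example $\AA\dashv \UU$ cited just before its statement.
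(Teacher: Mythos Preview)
Your argument is correct and follows essentially the same route as the paper: define $\sim$ as the least congruence generated by the instances of the new identities $E$, take the quotient, and verify that the quotient map $\eta_{\mathbf A}$ has the universal property required of the unit of an adjunction. Your version is in fact more explicit than the paper's---you spell out the action of $\FF$ on morphisms and the key computation $g(s^{\mathbf A}(\bar a))=s^{\mathbf B}(g(\bar a))=t^{\mathbf B}(g(\bar a))=g(t^{\mathbf A}(\bar a))$ showing that $\sim_{\mathbf A}\subseteq\ker g$---but the underlying strategy is identical.
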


\begin{proof}

Consider an algebraic theory $\A$ and another algebraic theory $\A'$ that is built with the signature of $\A$ plus some new identities $E$, ie, $$ \xi_{\A'} = \xi_{\A} \cup E,$$ where $\xi _{\A}$ is the set of axioms of the algebraic structure $\A$. The adjoint functor theorem makes it quite clear that the forgetful functor $\UU :\A' \longrightarrow \A$ has a left adjoint, but we will describe this in detail. Let $\textbf{A}$ be an algebra in $\A $. As the lattice of all congruences in $\textbf{A} $, $(Con_{ A} , \cap ,\cup ) $ is a complete lattice for the inclusion relation, consider $\sim $ to be the least congruence containing all equations of $\A'$. Now consider the functor $\AA :\A \longrightarrow \A'$ that assigns to each algebra $\textbf{A}$ in $\A $ the quotient $\textbf{A}/ \sim $ in $\A'$ and to each morphism in $\A $ the same morphism in $\A' $. 

As morphisms are not about satisfying equations but preserving operations, the image of the morphisms in $\A$ are not affected by the congruence $\sim $ and remain morphisms. Observe that if $\textbf{A}$ is an algebra of $\A $ then the quotient $\textbf{A}/ \sim $ is the smallest image of $A$ that respects all the axioms of $\A' $ due to the fact that $\sim $ is the smallest congruence containing all equations of $\A'$. \medskip

$\eta_{A}: \textbf{A} \rightarrow \AA \UU (\textbf{A})$ is the unit of the left adjunction $\AA \dashv \UU$ and the commutativity of the following diagram shows that $\eta_{A}$ has the universal property: Let $\textbf{A}\in \A $, $\textbf{B}\in \A'$ and $g : \textbf{A} \rightarrow \AA (\textbf{B}) $. Then, exists a unique $f: \textbf{A}/\sim \rightarrow \textbf{B}$ such that $g=\AA (f)\circ \eta_{A}$.

\begin{center}

\begin{tikzpicture}
\path (-1,2) node[] (G) {$\textbf{A}$};

\path (2,2) node[] (GG) {$\AA (\textbf{A} /\sim )$};

\path (2,-1) node[] (H) {$\AA (\textbf{B} )$};

\path (4,2) node[] (GGG) {$\textbf{A} /\sim $};
\path (4,-1) node[] (HH) {$\textbf{B} $};

\draw[arrows=-latex'] (GGG) -- (HH) node[pos=.5,left] {$\exists ! f$};

\draw[arrows=-latex'] (G) -- (GG) node[pos=.7,above] {$\eta_{\A}$};
\draw[arrows=-latex'] (GG) -- (H) node[pos=.5,right] {$\AA (f)$};
\draw[arrows=-latex'](G) -- (H) node[pos=.5,left] {$g$};
\end{tikzpicture}
\end{center} 

\end{proof}

When we consider the\textit{ algebraic theory of a skew lattice} and the \textit{algebraic theory of a lattice}, the one identity that enriches the second structure is again commutativity. Consider the \textit{forgetful functor} $\UU : \La \rightarrow \Sl $  and the \textit{"abelianizor" functor} $\AA : \Sl \rightarrow \La $ that, in this context, assigns to each skew lattice $\textbf{S}$ its correspondent commutative version, that is, its correspondent lattice $\textbf{S}/\sim $, where $\sim $ is determined by the commutativity axiom. On the following we will show that, in this case, the congruence $\sim $ determined by the axiom $x\wedge y=y\wedge x$ is the Green's relation $\DD $. Let us start by stating some preliminaries.\medskip

 A \textit{band} is a semigroup of idempotents. A \textit{semilattice} is a commutative band. When $S$ is a commutative semigroup, the set $E(S)$ of all idempotents in $S$ is a semilattice under the semigroup multiplication. When $S$ is not commutative, $E(S)$ needs not to be closed under multiplication. Skew lattices can be seen as double bands, $(S,\wedge )$ and $(S,\vee )$, with an extra property, absorption, that relates the two operations $\wedge $ and $\vee $ in the sense that\smallskip

\begin{center}
$x\vee y=x$ iff $x\wedge y = y$ and $x\vee y = y$ iff $x\wedge y=x$.
\end{center}

Influenced on the natural partial order and the Green's relations defined for bands in \cite{Ho76}, we define in a skew lattice $S$:
\begin{itemize}
\item[] the \textit{natural partial order} by $x\geq y$ iff $x\wedge y=y=y\wedge x$, or dually, $x\vee y = x = y\vee x$;
\item[] the \textit{natural preorder} by $x\succeq y$ iff $x\vee y\vee x = x$  or, dually, $y\wedge x\wedge y = y$;
\item[] the \textit{natural equivalence} by $x\equiv y$ iff $x\preceq y$ and $y\preceq x$.
\end{itemize}

The Green's relations simplified for bands by Howie in \cite{Ho76}, can be defined in a band $S$ by:

\begin{itemize}
\item[] $x\RR y \text{  iff  }  xy=y \text{ and } yx=x$ 
\item[] $x\LL y \text{  iff  } xy=x \text{ and } yx=y$ 
\item[] $x\DD y \text{  iff  } xyx=x \text{ and } yxy=y$ 
\end{itemize}
\medskip

$\DD $ is a congruence relation on any band $S$. $\LL $ and $\RR $ need not to be. Moreover, $\DD = \RR \circ \LL = \LL \circ \RR = \RR \vee \LL$ and $\RR \wedge \LL = \RR \cap \LL := \HH $.
\medskip

In a skew lattice, Leech \cite{Le96} defines the Green's relations as follows:

\begin{itemize}
\item[] $\RR=\RR_{\wedge}=\LL_{\vee}$  
\item[] $\LL=\LL _{\wedge}=\RR_{\vee}$ 
\item[] $\DD =\DD _{\wedge}=\DD_{\vee}$ 
\end{itemize}
\medskip

All of these relations are congruences on any skew lattice \textbf{S}. Right-handed skew lattices are the skew lattices for which $\RR =\DD$ while left-handed skew lattices are determined by $\LL =\DD$ \cite{Le96}.\medskip

As all elements $\leq $-related are $\preceq $-related, the natural preorder $\preceq $ is \emph{admissible} with respect to the natural partial order $\leq $. The fact that the $\DD$ equivalence can be expressed by the preorder allows us to draw diagrams like the one on the Figure \ref{fig:skewdiag}, that are capable to represent skew lattices: $a$ and $b$ are $\DD$ related as expressed by the dashed segments and all others are related by the natural partial order as expressed by full segments. \medskip

\begin{figure}  
\begin{center}  
\begin{pspicture}(-2,-2)(2,2)

\psline[linewidth=0.5 pt,linestyle=dashed]{*-*}(-1.5,0)(-0.5,0) 
\psline[linewidth=0.5 pt,linestyle=dashed]{*-*}(0.5,0)(1.5,0)
\psline[linewidth=0.5 pt,linestyle=dashed]{*-*}(-0.25,1)(0.25,1)

\psline[linewidth=0.5pt]{*-*}(-1.5,0)(-0.25,1) 
\psline[linewidth=0.5 pt]{*-*}(-0.5,0)(0.25,1) 
\psline[linewidth=0.5pt]{*-*}(0.5,0)(-0.25,1) 
\psline[linewidth=0.5 pt]{*-*}(1.5,0)(0.25,1)

\psline[linewidth=0.5pt]{*-*}(0.5,0)(0,-1)
\psline[linewidth=0.5 pt](1.5,0)(0,-1) 
\psline[linewidth=0.5 pt](-0.5,0)(0,-1) 
\psline[linewidth=0.5 pt](-1.5,0)(0,-1) 

\uput[140](-0.5,0){$b$} 
\uput[140](-1.5,0){$a$} 
\uput[40](0.5,0){$c$}
\uput[40](1.5,0){$d$}  
\uput[40](0.25,1){$2$} 
\uput[140](-0.25,1){$1$} 
\uput[270](0,-1){$0$} 

\end{pspicture}

\caption{\small \sl The diagram of a skew lattice.} \label{fig:skewdiag} 

\end{center}  
\end{figure}
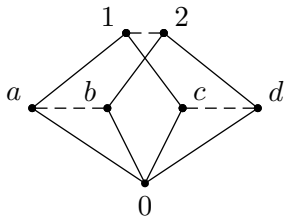 

\begin{theorem}\cite{Le89} \label{comsup}
The center of a skew lattice $\textbf{S}$ is the subalgebra formed by the union of all its singleton $D$-classes. In particular, $\textbf{S}$ is a lattice if either $\vee $ or $\wedge $ is commutative. 
\end{theorem}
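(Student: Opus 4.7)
The proof naturally splits into the two inclusions of the set equality for $Z(S)$, followed by a short deduction of the ``in particular'' part. For the easy inclusion, suppose $x \in Z(S)$ and $y\,\DD\,x$. By the band characterization of $\DD = \DD_\wedge$ we have $x\wedge y\wedge x = x$ and $y\wedge x\wedge y = y$; using centrality of $x$ to commute factors, these collapse to $x\wedge y = x$ and $y\wedge x = y$, and one more commutation gives $x=y$. Hence the $\DD$-class of $x$ is the singleton $\{x\}$.

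For the converse, fix $x$ with $\DD$-class $\{x\}$ and an arbitrary $y \in S$. The central step is to promote $x$ to satisfy the horizontal-dual absorptions $S_9$ and $S_{12}$, namely $(x\wedge y)\vee x = x$ and $x\vee (y\wedge x) = x$. To get $S_9$ I set $w := (x\wedge y)\vee x$ and compute, using only $S_6$, associativity, and idempotency, that $x\vee w\vee x = x$ and $w\vee x\vee w = w$. By the preorder characterization of $\DD$ this means $w\equiv x$, hence $w\,\DD\,x$ and so $w=x$ by hypothesis. The identity $S_{12}$ is obtained identically, with $S_3$ replacing $S_6$.

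Combining $S_6$ with the newly established $S_9$ at $x$ gives $x\vee(x\wedge y) = (x\wedge y)\vee x = x$, so $x\wedge y \leq x$ in the natural partial order; analogously, $S_3$ and $S_{12}$ yield $y\wedge x \leq x$. Unwinding $\leq$ produces $(x\wedge y)\wedge x = x\wedge y$ and $x\wedge(y\wedge x) = y\wedge x$, i.e., the element $x\wedge y\wedge x$ equals both $x\wedge y$ and $y\wedge x$, so $x\wedge y = y\wedge x$. The corresponding $\vee$-commutation follows by vertical duality (Theorem~\ref{duality}); centrality is preserved by both operations, so $Z(S)$ is in fact a subalgebra. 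For the corollary, if $\wedge$ is commutative then $(S,\wedge)$ is a semilattice, on which Green's $\DD$-relation is trivially the identity; since $\DD = \DD_\wedge = \DD_\vee$, every $\DD$-class of $S$ is a singleton, so $S = Z(S)$ is a lattice, and the case of commutative $\vee$ is dual. The delicate point throughout is the derivation of $S_9$ at a singleton-$\DD$-class element; once it is in hand, the reduction $x\wedge y = x\wedge y\wedge x = y\wedge x$ is mechanical.
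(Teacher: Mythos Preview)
The paper does not supply its own proof of this theorem; it is stated with a citation to \cite{Le89} and left unproved, so there is no in-paper argument to compare against. Your proof is correct and self-contained. The key manoeuvre---showing that an element $x$ with singleton $\DD$-class automatically satisfies the skew$^{*}$ absorptions $S_{9}$ and $S_{12}$ by observing that $(x\wedge y)\vee x$ and $x\vee(y\wedge x)$ are $\DD$-related to $x$ via $S_{6}$ and $S_{3}$---is clean and natural, and the reduction $x\wedge y=x\wedge y\wedge x=y\wedge x$ from $x\wedge y\leq x$ and $y\wedge x\leq x$ is exactly right.

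One small remark: the phrase ``centrality is preserved by both operations'' is a little quick. That $x\wedge z$ commutes with every $y$ under $\wedge$ is immediate from centrality of $x$ and $z$, but $\vee$-commutation of $x\wedge z$ with an arbitrary $y$ is not direct from the definitions. The cleanest way to close the subalgebra claim is to use the set equality you have just established: if $x,z\in Z(S)$ and $w\,\DD\,(x\wedge z)$, then since $x$ and $z$ $\wedge$-commute with $w$ one gets $(x\wedge z)\wedge w = x\wedge z$ and $w\wedge(x\wedge z)=w$ from the $\DD$-identities, whence $w=x\wedge z$; thus $x\wedge z$ has a singleton $\DD$-class and lies in $Z(S)$, with the $\vee$-case dual.
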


Given nonempty sets, $L$ and $R$, its product $L\times R$ is a skew lattice under the operations $(x,y)\vee (x',y') = (x',y)$ and $(x,y)\wedge (x',y') = (x,y')$.  A \textit{rectangular} skew lattice is a isomorphic copy of this skew lattice. All $\DD $-equivalence classes are rectangular, both in bands and in skew lattices. When working with bands in \cite{Mc54}, McLean referred to these rectangular algebras as \textit{anticommutative idempotent semigroups}, that is, bands for which no two distinct elements commute. According to Proposition \ref{order_dclass} as well no two elements in each $\DD $-class are order related.\medskip

Moreover, he characterized these by the identity $abc=ac$ and proved that, in a band $\textbf{S}$, there exists a homomorphism $\phi $ of $\textbf{S}$ onto a semilattice $\textbf{T}$ such that the inverse image of any element of $\textbf{T}$ is a band and $\phi $ is the weakest in the sense that any other commutative homomorphic image of $\textbf{S}$ is also a homomorphic image of $\textbf{T}$ . In other words, the congruence classes of $\DD $ form maximal rectangular subbands of $\textbf{S}$ and the quotient algebra $\textbf{S}/\DD $ is the maximal semilattice image of $\textbf{S}$. Thus, we can look at a band as a semilattice diagram with each node filled in by a rectangular band. \medskip

Furthermore, influenced by the Clifford-McLean Theorem for bands \footnote{Result independently by A. H. Clifford in \cite{Cl41} thus known as Clifford-McLean theorem for bands.}, Leech stated in \cite{Le89} the following result known as the Leech's first decomposition theorem: 

\begin{theorem}\cite{Le89}\label{1decomp}
Let $\textbf{S}$ be a skew lattice. Then, $\DD$ is a congruence in $\textbf{S}$, $\textbf{S}/\DD $ is the maximal lattice image of $\textbf{S}$ and all congruence classes of $\DD $ are maximal rectangular skew lattices in $\textbf{S}$. The maximal rectangular subalgebras of a skew lattice form a partition with the induced quotient algebra being the maximal lattice image of the given skew lattice.
\end{theorem}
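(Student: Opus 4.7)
The plan is to derive the theorem from Theorem \ref{te_fundamental} applied to the inclusion of axiom systems $\xi_{\La^T} = \xi_{\Sl^T} \cup \{S_7, S_8\}$. Once $\DD$ is identified with the least congruence on $\textbf S$ whose quotient is commutative, the description of $\textbf S/\DD$ as the maximal lattice image is immediate: it coincides with $\AA(\textbf S)$ for the adjunction $\AA \dashv \UU$ built in the proof of Theorem \ref{te_fundamental}, carrying the universal unit $\eta_S : \textbf S \to \textbf S/\DD$.

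The first step is to verify that $\DD$ is a congruence. Both $(S,\wedge)$ and $(S,\vee)$ are bands, so Howie's result gives that $\DD_{\wedge}$ is a congruence on $(S,\wedge)$ and $\DD_{\vee}$ is a congruence on $(S,\vee)$; the equalities $\DD = \DD_{\wedge} = \DD_{\vee}$ recalled above then make $\DD$ simultaneously compatible with both operations. Next I would observe that $\textbf S/\DD$ is a lattice: the Clifford--McLean description of each band quotient $(S,\wedge)/\DD_\wedge$ as a semilattice yields commutativity of $\wedge$ in the quotient, and dually for $\vee$.

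The main step, and the chief obstacle, is showing that $\DD$ is the least congruence whose quotient is commutative. Let $\sim$ be any congruence on $\textbf S$ such that $\textbf S/{\sim}$ is commutative, and let $x \mathrel{\DD} y$. The definition of $\DD$ via the natural preorder gives $x = x\wedge y\wedge x$; using $\wedge$-commutativity modulo $\sim$ one then obtains $x = x\wedge y\wedge x \sim y\wedge x\wedge x = y\wedge x$, and symmetrically $y \sim y\wedge x$, whence $x \sim y$ by transitivity. Thus $\DD \subseteq \sim$, which is precisely the minimality condition required in the construction of Theorem \ref{te_fundamental}, and the universal property of $\textbf S/\DD$ as the maximal lattice image follows.

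For the final clause, each $\DD$-class is a subalgebra because $\DD$ is a congruence, and it is a rectangular skew lattice since any two of its elements satisfy $x\wedge y\wedge x = x$ and dually $x\vee y\vee x = x$ by the definition of $\DD$. Maximality among rectangular subalgebras is the converse: if $R \subseteq S$ is any rectangular subalgebra, then the identities $x\wedge y\wedge x = x$ and $y\wedge x\wedge y = y$ holding throughout $R$ force every pair of its elements to be $\DD$-related in $\textbf S$, so $R$ lies inside a single $\DD$-class. This yields the stated partition of $\textbf S$ into maximal rectangular blocks whose induced quotient is the maximal lattice image.
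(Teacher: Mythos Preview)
Your argument is correct and mirrors the paper's own treatment: Theorem \ref{1decomp} is cited from \cite{Le89}, and the paragraph immediately following it recasts the result through Theorem \ref{te_fundamental} in exactly the way you propose, identifying $\DD$ with the least congruence $\sim$ forcing commutativity. The only difference is that where the paper appeals to ``uniqueness of the adjunction'' to conclude $\DD = \sim$ (tacitly using the cited Clifford--McLean/Leech fact), you supply the explicit element-level verification that $\DD \subseteq \sim$, which makes your version slightly more self-contained.
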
 

The natural equivalence $\DD $ is the "key" to look at this \textit{First Decomposition Theorem} through the general result stated in Theorem \ref{te_fundamental}. In the blocks of $\mathbf{S}/\DD$ we collapse such maximal sets where no two elements commute.  
The commutativity of one of the operations in a skew lattice gives us the commutativity of the other operation. Considering the new equation to be the commutativity of one of them, $\Sl $ is the algebraic category corresponding to $\A $ and $\La $ is the algebraic category corresponding to $\A '$. The functor $\FF : \Sl \rightarrow \La $ is left adjoint to  $\UU : \La \rightarrow \Sl $. Hence, by the uniqueness of the adjunction, the congruences $\sim $ and $\DD $ coincide, up to isomorphism. Moreover, $\DD $ is the smallest congruence for which the quotient $\textbf{S}/\DD $ satisfies the property of commutativity. \medskip 

On the following we present a characterization for the natural order by an identity, derive a characterization for right(left)-handed skew lattices and show that all $\DD $ classes are composed of unrelated elements (with respect to the order relation). 

\begin{lemma}\label{order_id}

Let $\textbf{S}$ be a skew lattice and $x,y\in S$. Then $x\geq y$ iff $y=x\wedge y\wedge x$ or, dually, $x=y\vee x\vee y$. 

\end{lemma}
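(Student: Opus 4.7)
The plan is to prove the equivalence $x\geq y \iff y = x\wedge y\wedge x$ directly from the definitions, using only associativity $S_1$ and idempotency (which was derived from the absorption laws in the introduction). The dual characterization $x\geq y \iff x = y\vee x\vee y$ is then immediate by the self-duality of the theory $\Sl^T$ established in Theorem \ref{duality}, since the natural partial order $x\geq y$ is itself defined by an identity with a $\wedge$/$\vee$ dual version.

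For the forward direction, I would assume $x\geq y$, which by definition means $x\wedge y = y$ and $y\wedge x = y$. Then simply compute
\[
x\wedge y\wedge x \;=\; (x\wedge y)\wedge x \;=\; y\wedge x \;=\; y,
\]
using only $S_1$.

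For the converse, suppose $y = x\wedge y\wedge x$. To show $x\geq y$ I need both $x\wedge y = y$ and $y\wedge x = y$. Using associativity and $x\wedge x = x$ (idempotency, shown in the preliminaries from $S_3$--$S_6$), I compute
\[
x\wedge y \;=\; x\wedge (x\wedge y\wedge x) \;=\; (x\wedge x)\wedge y\wedge x \;=\; x\wedge y\wedge x \;=\; y,
\]
and symmetrically $y\wedge x = (x\wedge y\wedge x)\wedge x = x\wedge y\wedge (x\wedge x) = x\wedge y\wedge x = y$. This yields $x\geq y$.

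There is really no hard step: the argument is a formal manipulation in the band $(S,\wedge)$. The only thing to be careful about is that the natural partial order was given by two conjoint conditions, so one must verify both $x\wedge y = y$ and $y\wedge x = y$ in the converse; sandwiching $y$ between two copies of $x$ lets one collapse either the left or the right $x$ by idempotency, which is exactly why the identity characterization works. The dual statement is obtained by replacing every operation with its dual throughout and invoking Theorem \ref{duality}.
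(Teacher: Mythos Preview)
Your argument is correct and is essentially the same as the paper's: the forward direction uses $x\wedge y = y$ and $y\wedge x = y$ to collapse $x\wedge y\wedge x$ to $y$, and the converse uses idempotency of $x$ to absorb an extra $x$ on either side, which is exactly the content of the paper's one-line observation that $x\geq x\wedge y\wedge x$. Your write-up is in fact cleaner than the printed proof, which contains evident typos (it writes $x\wedge y\wedge x = x\wedge x = x$ and ``$\geq x$'' where $y$ and ``$\leq x$'' are intended).
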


\begin{proof} 
 
 Let $x,y\in S$. If $x\geq y$ then $x\wedge y\wedge x=x\wedge x=x$. 
 
Conversely, $y=x\wedge y\wedge x\geq x$.
 
\end{proof}

\begin{proposition}\label{order_sides}

Let $\textbf{S}$ be a skew lattice. $\textbf{S}$ is right-handed iff for all $x,y\in S$, $y\wedge x\leq x$ and $x\leq x\vee y$. Analogously, $\textbf{S}$ is left-handed iff for all $x,y\in S$, $x\wedge y\leq x$ and $x\leq y\vee x$.

\end{proposition}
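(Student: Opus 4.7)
The plan is to translate the order conditions directly into the defining identities $S_{15}$--$S_{18}$ via Lemma \ref{order_id}, and then invoke the fact that right-/left-handedness is the conjunction of the two appropriate identities.

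First I would handle the right-handed case. By Lemma \ref{order_id}, the condition $y\wedge x\leq x$ is equivalent to the identity $y\wedge x=x\wedge(y\wedge x)\wedge x$. Using associativity ($S_1$) together with idempotency of $\wedge$ (derived from $S_1$--$S_6$), the right-hand side collapses to $x\wedge y\wedge x$, so the order condition is precisely the identity $S_{15}$: $x\wedge y\wedge x\approx y\wedge x$. Dually, $x\leq x\vee y$ is by Lemma \ref{order_id} equivalent to $x\vee y=x\vee(x\vee y)\vee x$, and, by $S_2$ and idempotency of $\vee$, the right-hand side reduces to $x\vee y\vee x$. Thus the condition is exactly $S_{16}$: $x\vee y\vee x\approx x\vee y$. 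Since right-handedness is defined by the conjunction of $S_{15}$ and $S_{16}$, this yields the first equivalence.

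For the left-handed case I would argue analogously. The condition $x\wedge y\leq x$ translates by Lemma \ref{order_id} into $x\wedge y=x\wedge(x\wedge y)\wedge x$, which simplifies to $x\wedge y=x\wedge y\wedge x$, i.e.\ $S_{17}$. Similarly, $x\leq y\vee x$ translates into $y\vee x=x\vee(y\vee x)\vee x$, which simplifies to $y\vee x=x\vee y\vee x$, i.e.\ $S_{18}$. Since left-handedness is the conjunction of $S_{17}$ and $S_{18}$, this finishes the second equivalence.

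The only real work is the routine collapsing of repeated occurrences of $x$ in $x\wedge(y\wedge x)\wedge x$ and $x\vee(x\vee y)\vee x$ via associativity and idempotency; there is no conceptual obstacle, since Lemma \ref{order_id} has already done the heavy lifting of expressing the natural order by an identity.
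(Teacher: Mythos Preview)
Your proof is correct and follows exactly the same approach as the paper: both use Lemma~\ref{order_id} to rewrite each order condition as an identity, then collapse the repeated $x$'s via associativity and idempotency to obtain $S_{15}$--$S_{18}$. The only difference is that the paper leaves the associativity/idempotency simplification implicit while you spell it out.
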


\begin{proof} 
 
Let $x,y\in S$. By Lemma \ref{order_id}, $y\wedge x=x\wedge y\wedge x$ is equivalent to $y\wedge x\leq x$ as well as $x\vee y= x\vee y\vee x$ is equivalent to $x\leq x\vee y$. The left-handed case is analogous.
 
\end{proof}

\begin{proposition}\cite{Le89}\label{order_dclass}

Let $\textbf{S}$ be a skew lattice and $x,y\in S$. Then $x\geq y$ and $x\DD y$ implies $y=x$.

\end{proposition}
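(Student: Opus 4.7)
The plan is to apply the characterization of the natural order from Lemma \ref{order_id} directly together with the band-theoretic formula for $\DD$ applied to $(S,\wedge)$, and observe that the two identities collide.

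First I would unfold the hypothesis $x \geq y$ using Lemma \ref{order_id}: this gives $x \wedge y \wedge x = y$. Next I would unfold the hypothesis $x \,\DD\, y$. Since $\DD = \DD_{\wedge}$ in a skew lattice and $\DD$ on the band $(S,\wedge)$ is the McLean-style relation $x \DD y \Leftrightarrow x \wedge y \wedge x = x$ and $y \wedge x \wedge y = y$, this gives in particular $x \wedge y \wedge x = x$. Equating the two expressions for $x \wedge y \wedge x$ forces $y = x$.

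The proof is essentially one line once Lemma \ref{order_id} is in place, so the main (very minor) obstacle is just being consistent about which band structure one uses to evaluate $\DD$; one could equally well run the dual argument through $\vee$, using $y \vee x \vee y = x$ from Lemma \ref{order_id} against $y \vee x \vee y = y$ from $\DD_{\vee}$. Either way, no appeal to handedness or to the Leech decomposition is needed, only the identification of $\geq$ and of $\DD$ by identities.
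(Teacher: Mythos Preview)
Your argument is correct: from Lemma~\ref{order_id} the hypothesis $x\geq y$ gives $x\wedge y\wedge x=y$, while $x\,\DD\, y$ in the band $(S,\wedge)$ gives $x\wedge y\wedge x=x$, and equating the two yields $x=y$. The paper does not include its own proof of this proposition (it is simply cited from \cite{Le89}), but your one-line derivation is exactly the natural argument that Lemma~\ref{order_id} was set up to enable, so there is nothing to compare beyond noting that your proof fills the gap the paper leaves to the reference.
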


Proposition \ref{order_dclass} unveils that, in a skew lattice, all $\DD $-classes constitute antichains. Hence, all the order structure of a skew lattice is unveiled by the coset bijections.\medskip

Can one describe a skew lattice through its partial order? In the case of lattices its known that $x\leq y \Leftrightarrow x\wedge y=x $ determines the isomorphism between he order structure and the algebraic structure. In the context of skew lattices Lemma \ref{order_id} expresses a similar equivalence but its not able to fully describe the operations. Unlike what happens for lattices, a skew lattice is not determined by its (partial) order structure. For instance, the case of the diagrams on the figure \ref{fig_det} show an Hasse diagram corresponding to two different skew lattices. Recall that, in the case of lattices, the Hasse diagram determines the order structure of the lattice. \medskip

\begin{figure}

\begin{center}

$\begin{array}{ccc}

\begin{pspicture}(-2,-2)(2,2) 

\psline[linewidth=0.5pt]{*-*}(-1,0)(0,1) 
\psline[linewidth=0.5pt]{*-*}(1,0)(0,1) 

\psline[linewidth=0.5pt]{*-*}(1,0)(0,-1)
\psline[linewidth=0.5 pt](-1,0)(0,-1) 

\uput[180](-1,0){$a$} 
\uput[1](1,0){$b$} 
\uput[90](0,1){$1$} 
\uput[270](0,-1){$0$} 

\end{pspicture}

&

&

\begin{pspicture}(-2,-2)(2,2) 

\psline[linewidth=0.5pt]{*-*}(-1,0)(0,1) 
\psline[linewidth=0.5pt]{*-*}(1,0)(0,1) 
\psline[linewidth=0.5 pt,linestyle=dashed]{*-*}(-1,0)(1,0)
\psline[linewidth=0.5pt]{*-*}(1,0)(0,-1)
\psline[linewidth=0.5 pt](-1,0)(0,-1) 

\uput[180](-1,0){$a$} 
\uput[1](1,0){$b$} 
\uput[90](0,1){$1$} 
\uput[270](0,-1){$0$}  

\end{pspicture}

\\

S_1

&

&

S_2

\end{array}$

\end{center}

\caption{Two skew lattices with the same order structure.}\label{fig_det}

\end{figure}
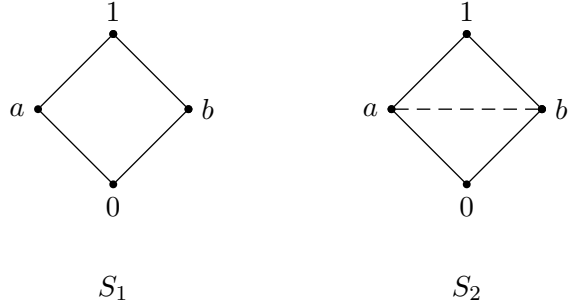

All posets determine categories. Posets are equivalent to one another if and only if they are isomorphic. According to Proposition \ref{order_dclass}, that is the case of the poset category of a skew lattice $\textbf{S}$ and the poset category of the lattice that is determined by the correspondent quotient $\textbf{S}/\DD $. In \cite{Le89}, Leech defines the \textit{natural graph} of a skew lattice $S$ as the undirected graph $(S,E)$ given by the natural partial order of $S$, where $S$ is the set of vertices and $\set{x,y}$ forms an edge in $E$ whenever $x>y$ or $y>x$. The natural graph of a skew lattice \textbf{S} is a part of the \textit{Hasse diagram of the skew lattice} and equivalent to the poset category of \textbf{S}. 
\medskip

A skew lattice is \textit{connected} when its natural graph is connected, that is, when for every pair of vertices $(x,y)$, the graph contains a path from $x$ to $y$. A \textit{component} of $S$ is any connected component of its graph. Already here we can see the intersection with the $\DD $-congruence: $S/\DD $ is a lattice and therefore its Hasse diagram is a connected graph, thus each component of $S$ has a nonempty intersection with each equivalence class of $S$. The following result stated in \cite{Le89} is the complement to Theorem \ref{1decomp}:

\begin{theorem}\cite{Le93}\label{cdecomp}
The components of a skew lattice $S$ are its maximal connected subalgebras. Moreover, the partition of $S$ into components is a congruence partition for which the induced quotient algebra is the maximal rectangular image of $S$. 
\end{theorem}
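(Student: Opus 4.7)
The plan is to identify the connected-component equivalence on $S$ with the kernel of the maximal rectangular image of $\mathbf{S}$, obtained by applying Theorem \ref{te_fundamental} to the subvariety of rectangular skew lattices. Rectangular skew lattices are axiomatized by $S_1$--$S_6$ together with $x\wedge y\wedge x\approx x$; in a skew lattice this identity is equivalent to $x\vee y\vee x\approx x$, since both simply say that the whole algebra is a single $\DD$-class. Theorem \ref{te_fundamental} therefore supplies a left adjoint to the inclusion of rectangular skew lattices into $\Sl$, sending $\mathbf{S}$ to $\mathbf{S}/{\equiv}$, where $\equiv$ is the least congruence on $\mathbf{S}$ containing all pairs $(x\wedge y\wedge x,x)$ and $(x\vee y\vee x,x)$; by construction $\mathbf{S}/{\equiv}$ is the maximal rectangular image of $\mathbf{S}$. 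Writing $\sim$ for the equivalence generated on $S$ by the natural order $\leq$ --- that is, the ``same connected component'' relation --- it suffices to prove $\equiv=\sim$: this yields the congruence partition and the universal property of the quotient at once, and the fact that components are the maximal connected subalgebras follows because congruence classes are subalgebras and components are by definition maximal connected.

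The inclusion $\sim\subseteq\equiv$ is immediate. If $x\leq y$, Lemma \ref{order_id} gives $x=y\wedge x\wedge y$, and the defining relation $y\wedge x\wedge y\equiv y$ then yields $x\equiv y$; thus $\equiv$ contains $\leq$ and, being an equivalence, contains $\sim$. For the converse $\equiv\subseteq\sim$ I would check two points: (a) each generating pair of $\equiv$ already lies in $\sim$, and (b) $\sim$ is itself a congruence on $(S,\wedge,\vee)$. Part (a) is a direct calculation: $(x\wedge y\wedge x)\wedge x=x\wedge y\wedge x=x\wedge(x\wedge y\wedge x)$ shows $x\wedge y\wedge x\leq x$, so $x\wedge y\wedge x\sim x$, and the dual computation gives $x\leq x\vee y\vee x$, hence $x\vee y\vee x\sim x$.

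Part (b) is the main obstacle. By the horizontal and vertical dualities of Section 1 (Theorem \ref{duality} and the dual algebras $(S,\vee,\wedge)^{h}$, $(S,\vee,\wedge)^{v}$), this reduces to the single claim that $x\leq y$ implies $x\wedge z\sim y\wedge z$ for every $z\in S$. The absorption laws $S_4$ and $S_5$ supply the Hasse edges
\[
x\wedge z\,\leq\,(x\wedge z)\vee(y\wedge z)\qquad\text{and}\qquad y\wedge z\,\leq\,(y\wedge z)\vee(x\wedge z),
\]
so the problem is to close the zig-zag between the two joins. Both of these joins lie in a common $\DD$-class, namely $[y\wedge z]_{\DD}$, since $x\leq y$ forces $[x]_{\DD}\leq[y]_{\DD}$ in the lattice $\mathbf{S}/\DD$ of Theorem \ref{1decomp}. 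Producing an explicit path between them inside this rectangular $\DD$-class is where the difficulty sits, and it hinges on the coset bijections between neighbouring $\DD$-classes that are the subject of the final section of the paper. Once (b) is verified, $\equiv=\sim$ and all three assertions of the theorem follow.
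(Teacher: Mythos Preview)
The paper does not prove this theorem; it is cited from \cite{Le93}, and immediately after stating it the paper explicitly raises as an \emph{open question} whether the congruence in Theorem~\ref{cdecomp} is ``determined by an axiom so that this result is a particular case'' of Theorem~\ref{te_fundamental}. Your proposal is precisely an attempt to settle that question, so there is no paper proof to compare against---only the gap you need to close.

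Your argument has a concrete error and a self-acknowledged incompleteness. The error is in step (b): the ``Hasse edges'' you claim,
\[
x\wedge z \;\leq\; (x\wedge z)\vee(y\wedge z)
\qquad\text{and}\qquad
y\wedge z \;\leq\; (y\wedge z)\vee(x\wedge z),
\]
are \emph{not} consequences of $S_4$ and $S_5$. Axiom $S_4$ gives $a\wedge(a\vee b)=a$, but you also need $(a\vee b)\wedge a=a$, and $S_5$ only yields $(b\vee a)\wedge a=a$. In fact, by Proposition~\ref{order_sides}, the inequality $a\leq a\vee b$ holds for all $a,b$ precisely when $\mathbf S$ is right-handed; in a general skew lattice it fails. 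So the zig-zag you set up does not even begin.

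The incompleteness you yourself flag---closing the path between two $\DD$-equivalent elements---is not a technicality to be patched later: by Proposition~\ref{order_dclass} each $\DD$-class is an antichain, so two $\DD$-related elements are never directly $\leq$-comparable, and any path between them must leave the class. Showing that $\sim$ is a congruence is exactly the substantive content of Theorem~\ref{cdecomp}; until you supply that, the appeal to Theorem~\ref{te_fundamental} is circular. The observation just before Theorem~\ref{cdecomp} in the paper (each component meets every $\DD$-class, since $S/\DD$ is connected) is a more promising starting point than your absorption argument, but you would still need the structure of the coset bijections from Theorem~\ref{coset_part} to make it work.
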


Regarding the statement of Theorem \ref{te_fundamental}, its unavoidable to open the question wether or not the congruence in Theorem \ref{cdecomp} is determined by an axiom so that this result is a particular case. \medskip

Skew lattices are (double) regular bands, that is, both operations satisfy the identity $xyxzx=xyzx$. This property is referred by Leech in \cite{Le89} as biregularity. In \cite{Ki58}, Kimura studies the structure of regular bands presenting in this context a factorization theorem stating that, when $\textbf{S}$ is a regular band, there exist a left regular band $\textbf{L}$ and a right regular band $\textbf{R}$ both of which have the same structure semilattice $\textbf{C}$ such that $\textbf{S}$ is isomorphic to the spined product of $\textbf{L}$ and $\textbf{R}$ with respect to $\textbf{C}$. By \textit{spined product} is meant:
 $$\textbf{R} \bowtie \textbf{L} =\set{(x,y): x\in R, y\in L, p(x)=q(y)}$$ when $p:\textbf{L}\rightarrow \textbf{C}$ and $q:\textbf{R}\rightarrow \textbf{C}$ are natural homomorphisms.This was later called \textit{fibered product} and is equivalent to the categorical concept of \textit{equalizer} of $\textbf{L}$ and $\textbf{R}$ with respect to $\textbf{C}$. This result is dependent of regularity, which is natural to skew lattices, and also has an analogue in that context known as Leech's \textit{Second Decomposition Theorem} \cite{Le89},

\begin{theorem}\cite{Le89}\label{2decomp}
The relations $\LL $ and $\RR $ are both congruences on any skew lattice $\textbf{S}$. Moreover, 
\begin{itemize}
\item[(i)] $S/\LL $ is the maximal right-handed image of $\textbf{S}$. 
\item[(ii)] $S/\RR $ is the maximal left-handed image of $\textbf{S}$. 
\item[(iii)] The induced epimorphisms $\textbf{S} \rightarrow \textbf{S}/\LL$ and $\textbf{S} \rightarrow \textbf{S}/\RR$ together yield an isomorphism of $\textbf{S}$ with the fibered product $\textbf{S}/\LL \times_{S/\DD } S/\RR $. 
\end{itemize}
\end{theorem}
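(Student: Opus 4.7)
The plan is to prove the three assertions in order, leaning on the fact that skew lattices are doubly regular bands (satisfying $xyxzx = xyzx$ under both $\wedge$ and $\vee$) and on the absorption laws, which identify $\LL_{\wedge}$ with $\RR_{\vee}$ and $\RR_{\wedge}$ with $\LL_{\vee}$.

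For the congruence assertion, I would invoke the classical Kimura--Howie result that on a regular band the Green's relations $\LL$ and $\RR$ are congruences. Applied to $(S,\wedge)$ this gives at once that $\LL = \LL_{\wedge}$ and $\RR = \RR_{\wedge}$ are compatible with $\wedge$. To obtain compatibility with $\vee$, I would translate the hypothesis $a\LL b$, namely $a\wedge b = a$ and $b\wedge a = b$, via absorption into the equivalent statement $a\vee b = b$ and $b\vee a = a$, i.e.\ $a\RR_{\vee}b$; the same Kimura--Howie argument applied to $(S,\vee)$ then yields compatibility with $\vee$. Since $\LL_{\wedge}$ and $\RR_{\vee}$ coincide, a single relation $\LL$ is simultaneously a $\wedge$-congruence and a $\vee$-congruence on $\textbf{S}$; the case of $\RR$ is handled dually.

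For (i) and (ii), by horizontal duality it suffices to treat $\textbf{S}/\LL$. I would verify the right-handed identities $S_{15}$ and $S_{16}$ in the quotient directly from the definition of $\LL$: computing inside the rectangular $\DD$-class, $(y\wedge x)\wedge(x\wedge y\wedge x) = y\wedge x$ and $(x\wedge y\wedge x)\wedge(y\wedge x) = x\wedge y\wedge x$, so $y\wedge x\,\LL\,x\wedge y\wedge x$; $S_{16}$ follows from the dual computation with $\vee$. For maximality, suppose $\theta$ is any skew-lattice congruence with $\textbf{S}/\theta$ right-handed, and suppose $a\LL b$; then $a\wedge b = a$ and $b\wedge a = b$, so in $\textbf{S}/\theta$, using $S_{15}$, one obtains $\bar a = \bar a\wedge\bar a = \bar a\wedge\bar b\wedge\bar a = \bar b\wedge\bar a = \bar b$. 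Hence $\LL \subseteq \theta$, and the universal-factorization content is then a consequence of Theorem \ref{homofact}.

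For (iii), I would define $\varphi:\textbf{S}\to \textbf{S}/\LL \times_{S/\DD}\textbf{S}/\RR$ by $\varphi(x)=([x]_{\LL},[x]_{\RR})$; since $\LL,\RR\subseteq\DD$, both components project to the same $\DD$-class, so $\varphi$ lands in the fibered product, and it is a homomorphism because each coordinate quotient map is. Injectivity is immediate: if $\varphi(x)=\varphi(y)$, then $x\LL y$ gives $x\wedge y = x$ while $x\RR y$ gives $x\wedge y = y$, hence $x = y$. For surjectivity I would use the standard regular-band identity $\DD = \LL\circ\RR$ recalled in the preliminaries: given $([x]_{\LL},[y]_{\RR})$ in the fibered product one has $x\DD y$, so there exists $w\in S$ with $x\LL w$ and $w\RR y$, and then $\varphi(w)=([x]_{\LL},[y]_{\RR})$. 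The hard part of the proof is the initial congruence step: although each of $\LL,\RR$ is a congruence on either band reduct by Kimura--Howie, verifying that the same relation is simultaneously compatible with $\wedge$ and with $\vee$ forces one to juggle the identifications $\LL_{\wedge} = \RR_{\vee}$ (and dually) via absorption, and this is the natural place where a wrong sided equation could slip in.
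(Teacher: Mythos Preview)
Your argument is correct in all three parts. The congruence step is fine once one notes, as you do, that $\LL_\wedge=\RR_\vee$ via absorption, so that Kimura--Howie applied separately to the two regular band reducts $(S,\wedge)$ and $(S,\vee)$ yields compatibility with both operations for the \emph{same} relation. Your verification of $S_{15}$ in $\textbf{S}/\LL$ is also correct; the equality $(y\wedge x)\wedge(x\wedge y\wedge x)=y\wedge x$ is just idempotency of $y\wedge x$, while $(x\wedge y\wedge x)\wedge(y\wedge x)=x\wedge y\wedge x$ is regularity, so $y\wedge x\;\LL\;x\wedge y\wedge x$ as claimed. The maximality argument and the fibered-product computation via $\DD=\LL\circ\RR$ and $\LL\cap\RR=\Delta$ are standard and correctly executed.

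As for comparison: the paper does not supply its own proof of this theorem. It is stated with attribution to \cite{Le89} and motivated by the preceding paragraph on Kimura's spined-product factorization of regular bands, the point being precisely that Leech's second decomposition is the double-band analogue of Kimura's result. Your proof follows exactly that suggested line --- reducing to the Kimura--Howie theory on each band reduct and gluing via the absorption dualities --- so there is nothing to contrast; you have filled in what the paper leaves to the reference.
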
 

In other words, every skew lattice factors as the fibered product of a right-handed skew lattice with a left-handed skew lattice over $S/\DD $, with both factors being unique up to  isomorphism. In the language of category theory, while considering models of the algebraic theory of skew lattices in the category of sets, this fibered product is the limit of a diagram consisting of two morphisms $f:\textbf{S}/\LL \rightarrow S/\DD $ and $g: \textbf{S}/\RR \rightarrow S/\DD $ with a common codomain $S_{\DD }$, the equalizer of $\textbf{S}/\LL$ and $\textbf{S}/\RR$ over  $S_{\DD }$. This is described on the following proposition:  

\begin{proposition}\cite{Le89}\label{2decomp_cat}

The following diagram is a pullback on the algebraic category of skew lattices. 

\begin{center}
\begin{tikzpicture}
\path (-1.5,1.5) node[] (S) {$S$};

\path (-1.5,-1.3) node[] (s1) {};
\path (-1.3,-1.5) node[] (s2) {};
\path (-1.3,-1.3) node[] (s3) {};

\path (-1.5,-1.5) node[] (L) {$S/\LL$};

\path (1.5,1.5) node[] (R) {$S/\RR$};

\path (1.5,-1.5) node[] (D) {$S/\DD$};

\draw[arrows=-latex'] (s1) -- (s3) node[pos=.5] {};
\draw[arrows=-latex'] (s2) -- (s3) node[pos=.5] {};
\draw[arrows=-latex'] (S) -- (R) node[pos=.5,right] {};
\draw[arrows=-latex'](S) -- (L) node[pos=.5,right] {};
\draw[arrows=-latex'] (L) -- (D) node[pos=.5,right] {};
\draw[arrows=-latex'](R) -- (D) node[pos=.5,right] {};
\end{tikzpicture}
\end{center}

\end{proposition}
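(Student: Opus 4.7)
Plan: The diagram consists of the canonical quotient morphisms $\pi_{\LL}:\textbf{S}\to \textbf{S}/\LL$ and $\pi_{\RR}:\textbf{S}\to \textbf{S}/\RR$ together with induced morphisms $q_{\LL}:\textbf{S}/\LL\to \textbf{S}/\DD$ and $q_{\RR}:\textbf{S}/\RR\to \textbf{S}/\DD$, and the proposition asserts that $\textbf{S}$ is the pullback of $q_{\LL}$ and $q_{\RR}$. My strategy is to read Theorem \ref{2decomp}(iii) categorically: it already identifies $\textbf{S}$ with the fibered product $\textbf{S}/\LL\times_{\textbf{S}/\DD}\textbf{S}/\RR$, so after constructing the pullback explicitly in $\Sl$ the result follows by transport along an isomorphism.

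First I would verify that the square commutes. Since $\LL\subseteq \DD$ and $\RR\subseteq \DD$, the canonical quotient $\pi_{\DD}:\textbf{S}\to \textbf{S}/\DD$ factors uniquely through each of $\pi_{\LL}$ and $\pi_{\RR}$, producing unique morphisms $q_{\LL}$ and $q_{\RR}$ with $q_{\LL}\circ \pi_{\LL}=\pi_{\DD}=q_{\RR}\circ \pi_{\RR}$. These are precisely the morphisms filling in the bottom and right sides of the diagram, so the outer square commutes.

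Next I would construct the pullback concretely. The algebraic category $\Sl$ is complete, so the pullback of $q_{\LL}$ and $q_{\RR}$ exists and is realised as the subalgebra
$$P=\set{(a,b)\in \textbf{S}/\LL \times \textbf{S}/\RR : q_{\LL}(a)=q_{\RR}(b)}$$
of the binary product, with coordinate-wise operations and projections $p_{\LL}:P\to \textbf{S}/\LL$ and $p_{\RR}:P\to \textbf{S}/\RR$. The commutativity already established induces, via the universal property of $P$, a unique comparison morphism $\varphi:\textbf{S}\to P$, $x\mapsto ([x]_{\LL},[x]_{\RR})$, satisfying $p_{\LL}\circ\varphi=\pi_{\LL}$ and $p_{\RR}\circ\varphi=\pi_{\RR}$. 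By Theorem \ref{2decomp}(iii), $\varphi$ is an isomorphism of skew lattices; hence $\textbf{S}$, equipped with $\pi_{\LL}$ and $\pi_{\RR}$, inherits the pullback universal property from $P$, which is exactly the assertion of the proposition.

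The substantive content is thus concentrated entirely in the isomorphism $\varphi$, and the main (only) obstacle is already discharged by Theorem \ref{2decomp}(iii): injectivity of $\varphi$ amounts to $\LL\cap\RR=\HH$ being the identity congruence on any skew lattice (from $x\wedge y=x$ and $y\wedge x=x$ one concludes $x=y$), and surjectivity amounts to the rectangular structure of each $\DD$-class, in which a prescribed $\LL$-class and $\RR$-class intersect in a unique element. Granted those facts, the remainder of the proof is a routine categorical translation and presents no further difficulty.
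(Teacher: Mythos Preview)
Your proposal is correct and follows exactly the route the paper takes: the paper does not give a separate formal proof but presents the proposition as the categorical restatement of Theorem \ref{2decomp}(iii), noting in the preceding paragraph that the fibered product there is precisely the pullback of $\textbf{S}/\LL$ and $\textbf{S}/\RR$ over $\textbf{S}/\DD$. Your argument simply makes this translation explicit, and the justification you supply for injectivity ($\LL\cap\RR$ trivial) and surjectivity (rectangular structure of $\DD$-classes) is the standard unpacking of that isomorphism.
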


As any skew lattice $S$ is embedded in the product $S/\RR \times S/\LL $, joint properties of $S/\RR $ and $S/\LL $ are often passed on to $S$, and conversely. In particular, 

\begin{theorem}\cite{Ka08}\label{duality2}
If, \textbf{S} is a skew lattice, $S/\RR $ and $S/\LL $ belong to a variety if and only if $S$ does.
\end{theorem}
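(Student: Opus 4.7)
The plan is to use Birkhoff's HSP theorem: a class of algebras is a variety precisely when it is closed under homomorphic images, subalgebras, and products. So to prove the equivalence it suffices to identify $S/\RR$ and $S/\LL$ as homomorphic images of $S$ for one direction, and to realize $S$ as a subalgebra of a product of algebras built from $S/\RR$ and $S/\LL$ for the other.

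For the forward direction, suppose $S$ lies in the variety $\mathcal{V}$. By Theorem \ref{2decomp}, both $\RR$ and $\LL$ are congruences on $S$, so $S/\RR$ and $S/\LL$ are quotient algebras, and hence homomorphic images of $S$. Since varieties are closed under homomorphic images, both $S/\RR$ and $S/\LL$ belong to $\mathcal{V}$.

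For the reverse direction, assume $S/\RR, S/\LL \in \mathcal{V}$. Varieties are closed under direct products, so $S/\LL \times S/\RR \in \mathcal{V}$. As the excerpt notes (and as follows from Theorem \ref{2decomp}(iii)), the natural map $S \to S/\LL \times S/\RR$ given by $s \mapsto (s/\LL, s/\RR)$ is an embedding, because its image is exactly the fibered product $S/\LL \times_{S/\DD} S/\RR \cong S$, which is a subalgebra of the direct product. Closure of $\mathcal{V}$ under subalgebras then yields $S \in \mathcal{V}$.

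The two ingredients that do the real work are therefore Birkhoff's characterization of varieties and the embedding supplied by Leech's Second Decomposition. The only subtle step is the injectivity of the canonical map $S \to S/\LL \times S/\RR$, but this is already built into Theorem \ref{2decomp}(iii): if $x \LL y$ and $x \RR y$ then $x \,\HH\, y$, and since $\HH = \LL \cap \RR$ is the identity on any skew lattice (each $\DD$-class being a rectangular skew lattice, where distinct elements are separated by $\LL$ or $\RR$), we get $x = y$. I expect no genuine obstacle; the proof is essentially a two-line invocation of HSP together with the decomposition already stated.
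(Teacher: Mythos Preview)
Your argument is correct and matches the paper's own reasoning: the paper does not spell out a proof but states immediately before the theorem that ``any skew lattice $S$ is embedded in the product $S/\RR \times S/\LL$,'' which together with the quotient maps from Theorem~\ref{2decomp} is exactly the HSP argument you give. You have simply made explicit what the paper leaves as a one-line remark.
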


Theorem \ref{duality2} is expressing a duality analogous to the one observed in semigroups and distinct to the one stated in Theorem \ref{duality}. It is rather useful as it can be observed in several proofs of \cite{Ka08} and \cite{Jo10}. 
 
\begin{proposition}\cite{Ka06}\label{duality2b}
A skew lattice \textbf{S} satisfies any identity or equational implication satisfied by both its left factor $S/\RR $ and its right factor $S/\LL $.
\end{proposition}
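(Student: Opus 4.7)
The plan is to derive Proposition \ref{duality2b} directly from Leech's Second Decomposition Theorem (Theorem \ref{2decomp}) together with the standard universal-algebra closure properties of identities and quasi-identities under the operators $S$ (subalgebras) and $P$ (products). First I would invoke Theorem \ref{2decomp}(iii) to realize $\mathbf{S}$, up to isomorphism, as the fibered product $\mathbf{S}/\LL \times_{\mathbf{S}/\DD} \mathbf{S}/\RR$. Since a fibered product sits inside the ordinary direct product as the subalgebra cut out by the equalizer condition, I obtain a monomorphism $\mathbf{S} \hookrightarrow \mathbf{S}/\LL \times \mathbf{S}/\RR$ in $\Sl$.

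The next step is to observe that an identity $s(x_1,\dots,x_n) \approx t(x_1,\dots,x_n)$ which is satisfied by two algebras is automatically satisfied by their direct product, because the evaluation of any term in a product is componentwise; thus if both $\mathbf{S}/\LL$ and $\mathbf{S}/\RR$ satisfy the identity, so does $\mathbf{S}/\LL \times \mathbf{S}/\RR$. Then, since identities are preserved by passage to subalgebras (the evaluation of terms on elements of the subalgebra only uses operations restricted to the subalgebra), the embedded copy $\mathbf{S} \hookrightarrow \mathbf{S}/\LL \times \mathbf{S}/\RR$ also satisfies the identity.

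For an equational implication, i.e. a quasi-identity of the form
\[
\bigl( s_1 \approx t_1 \,\wedge\, \cdots \,\wedge\, s_k \approx t_k \bigr) \;\Longrightarrow\; s \approx t,
\]
essentially the same argument applies, since quasi-identities are also preserved under both $P$ and $S$: if a tuple of elements in the subalgebra satisfies all the premise equations there, it satisfies them in the ambient product, hence satisfies the conclusion there, which is an equation and therefore still holds back in the subalgebra. Applied with the ambient product being $\mathbf{S}/\LL \times \mathbf{S}/\RR$ and the subalgebra being (the image of) $\mathbf{S}$, this yields the result.

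I do not expect any genuine obstacle: the proof is a direct application of Theorem \ref{2decomp} combined with the fact that $HSP$-classes and quasivarieties are closed under $S$ and $P$. The only mild point of care is to make explicit that the fibered product in the category $\Sl$ is computed as a subalgebra of the direct product (so that the embedding one needs is actually available); this is immediate from the construction of limits in algebraic categories of the kind discussed in the first section.
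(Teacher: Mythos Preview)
Your argument is correct and matches the approach the paper itself indicates: immediately before the proposition the paper notes that any skew lattice $\mathbf{S}$ embeds in the product $\mathbf{S}/\RR \times \mathbf{S}/\LL$, so that joint properties of the factors pass to $\mathbf{S}$, and then cites \cite{Ka06} for the statement without giving further details. Your proof simply makes this explicit via Theorem~\ref{2decomp}(iii) and the standard closure of identities and quasi-identities under $S$ and $P$, which is exactly the intended reasoning.
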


An example for which Proposition \ref{duality2b} isn't sufficient is proving that \textit{$\circ $ always equals $\nabla $}. In fact, this is untrue for a skew lattice \textbf{S} but true for both its left factor $S/\RR $ and its right factor $S/\LL $. Most of disjunctions of identities fall into this class of examples.

\section{The coset category}

The study of the coset structure of skew lattices began with Leech in \cite{Le93}. It derives from the first decomposition theorem stated in Theorem \ref{1decomp} and gives an introspective into the role of the partition that $\DD$-classes determine on each other providing important additional information. Cosets are irrelevant in both the context of Semigroup Theory or Lattice Theory being something very specific to skew lattices. Though, the coset structure reveals a new perspective that does not have a counterpart either in the theory of lattices or in the theory of bands. Several varieties of skew lattices were characterized using laws involving only cosets in \cite{Jo10}. In this section we revisit the category that describes this internal structure theory, the \textit{coset category}, and we will establish a relation between the coset structure and the poset structure of a skew lattice. \medskip

Consider a skew lattice $\mathbf S$ consisting of exactly two $\DD$-classes
$A>B$. Given $b\in B$ the subset
$A\wedge b\wedge A = \set{a\wedge b\wedge a\,|\,a\in A}$ is said to be a
\emph{coset} of $A$ in $B$. Similarly, a coset of $B$ in $A$ is any
subset
$B\vee a\vee B = \set{b\vee a\vee b\,|\, b\in B}$ of $A$, for a fixed
$a\in A$.  Given $a\in A$, the \emph{image} of $a$ in $B$ the set

 $$a\wedge B\wedge a = \set{a
\wedge b\wedge a\,|\,b\in B}=\set{b\in B\,|\,b\leq a}.$$  Dually, given $b\in B$ the set
$b\vee A\vee b = \set{a\in A:b\leq a}$ is the image of $b$ in $A$.

\begin{proposition}\cite{Jo10}\label{strg_prop}
Let $\mathbf S$ be a skew lattice with comparable $\DD$-classes $X>Y$
and let
$y,y'\in Y$. The following are equivalent:
\begin{itemize}
\item[(i)] $X\wedge y \wedge X = X\wedge y' \wedge X$,
\item[(ii)] for all $x\in X$, $x\wedge y\wedge x = x\wedge y'\wedge
x$,
\item[(iii)] there exists $x\in X$ such that
$x\wedge y\wedge x = x\wedge y'\wedge x$.
\end{itemize}
The dual result also holds.
\end{proposition}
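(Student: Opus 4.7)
The plan is to close the cycle of implications (ii) $\Rightarrow$ (iii) $\Rightarrow$ (i) $\Rightarrow$ (ii), exploiting two structural facts from the coset theory of Leech recalled in the paragraphs preceding the statement. First, the cosets $X\wedge y\wedge X$ (as $y$ varies in $Y$) partition $Y$, so two such cosets that share an element must coincide. Second, for each $x\in X$, the image $x\wedge Y\wedge x = \{b\in Y : b\leq x\}$ intersects every coset of $X$ in $Y$ in exactly one element; this is the content of Leech's coset-bijection theorem.

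The step (ii) $\Rightarrow$ (iii) is immediate, as the $\DD$-class $X$ is nonempty and (ii) is a uniform statement over its elements. For (iii) $\Rightarrow$ (i), I let $z := x\wedge y\wedge x = x\wedge y'\wedge x$ for the $x\in X$ supplied by (iii). Directly from the defining expressions, $z\in X\wedge y\wedge X$ and $z\in X\wedge y'\wedge X$; since these are cosets of $X$ in $Y$ and they share $z$, the partition property forces them to coincide, giving (i).

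For (i) $\Rightarrow$ (ii), I fix an arbitrary $x\in X$ and write $C := X\wedge y\wedge X = X\wedge y'\wedge X$ for the common coset. The element $x\wedge y\wedge x$ lies in $C$ (taking the outer variable to be $x$ in the coset definition) and in $x\wedge Y\wedge x$ (taking the inner variable to be $y$); symmetrically, $x\wedge y'\wedge x$ lies in $C$ and in $x\wedge Y\wedge x$. By the second structural fact, $C\cap (x\wedge Y\wedge x)$ is a singleton, so the two elements coincide. The dual statement then follows from the vertical duality $\wedge\leftrightarrow\vee$ of Theorem~\ref{duality}, which converts cosets of $X$ in $Y$ into cosets of $Y$ in $X$ and images $x\wedge Y\wedge x$ into images $b\vee X\vee b$.

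The main obstacle is that the argument leans on the two structural facts rather than manipulating the axioms $S_1$--$S_6$ directly. The partition fact is relatively routine once cosets are read as equivalence classes, but the "exactly one element" statement is the substantive content of Leech's coset-bijection theorem in \cite{Le93}; a self-contained derivation would essentially replicate a sizable portion of the internal structure theory that the present section is meant to revisit, so I would cite it rather than re-derive it.
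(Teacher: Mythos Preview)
The paper does not supply a proof of this proposition: it is stated with a citation to \cite{Jo10} and no \texttt{proof} environment follows. So there is nothing in the present paper to compare your argument against.

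Your argument itself is correct. The cycle (ii) $\Rightarrow$ (iii) $\Rightarrow$ (i) $\Rightarrow$ (ii) goes through exactly as you describe, using that the cosets $X\wedge y\wedge X$ partition $Y$ and that each image $x\wedge Y\wedge x$ is a transversal of those cosets. One organizational inaccuracy: you say these two structural facts are ``recalled in the paragraphs preceding the statement'', but in this paper they are the content of Theorem~\ref{coset_part}, which is stated \emph{after} Proposition~\ref{strg_prop}. This is not a logical circularity, since Theorem~\ref{coset_part} is an independent result of Leech \cite{Le93} and does not rely on Proposition~\ref{strg_prop}; but you should adjust the wording and cite Theorem~\ref{coset_part} explicitly rather than gesturing at earlier paragraphs.

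It is also worth noting that a lighter, self-contained route exists which avoids invoking the full coset-bijection theorem. From biregularity of $(S,\wedge)$ and the fact that $x\,\DD\,x'$ implies $x' = x'\wedge x\wedge x'$, one obtains for any $x,x'\in X$ and $y\in Y$ the identity
\[
x'\wedge y\wedge x' \;=\; x'\wedge x\wedge y\wedge x\wedge x' \;=\; x'\wedge(x\wedge y\wedge x)\wedge x'.
\]
This single identity gives (iii) $\Rightarrow$ (ii) immediately, and combined with the trivial (ii) $\Rightarrow$ (i) it also yields (i) $\Rightarrow$ (ii): if $x\wedge y\wedge x\in X\wedge y'\wedge X$, say $x\wedge y\wedge x = x''\wedge y'\wedge x''$, then $x\wedge y'\wedge x = x\wedge(x''\wedge y'\wedge x'')\wedge x = x\wedge(x\wedge y\wedge x)\wedge x = x\wedge y\wedge x$. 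This is presumably closer in spirit to how the result is established in \cite{Jo10}, and it sidesteps your worry about importing the heavier machinery of Theorem~\ref{coset_part}.
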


We call to a skew lattice \textbf{S} \textit{primitive} if it is composed by just two comparable $\DD $-classes, \textit{skew chain} when $S/\DD $ is a chain, \textit{diamond} when it is composed by two incomparable $\DD $-classes, $A, B$, a join class $J$ and a meet class $M$.  
\medskip

Due to absorption and regularity, the following result holds unveiling the coset structure of skew lattices through the description of a double partition that cosets induced on each other.

\begin{theorem}\cite{Le93} \label{coset_part}
Let $\mathbf{S}$ be a skew lattice with comparable $\DD$-classes $A>B$. Then, 
\begin{itemize}
\item[i)] $B$ is partitioned by the cosets of $A$ in $B$; dually $A$ is partitioned by the cosets of $B$ in $A$. 
\item[ii)] The image of any element $a\in A$ in $B$ is a transversal of the 
cosets of $A$ in $B$; dual remarks hold for any $b\in B$ and the cosets of $B$ in $A$. 
\item[iii)] Given cosets $B\vee a\vee B$ in $A$ and $A\wedge b\wedge A$ in $B$ a natural bijection of cosets, named \textit{coset bijection} is given by: $x\in B\vee a\vee B$ corresponds to $y\in A\wedge b\wedge A$ if and only if $x\geq y$. 
\item[iv)] The operations $\wedge$ and $\vee$ on $A\cup B$ are determined jointly by the coset bijections and the rectangular structure of each $\DD$-class.
\end{itemize}
\end{theorem}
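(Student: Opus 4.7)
The plan is to prove the four claims in order, exploiting two running ingredients: the natural-preorder identities $y\wedge x\wedge y = y$ and $x\vee y\vee x = x$ valid for all $x \in A$, $y \in B$ (these express $A > B$ in $S/\DD$), together with Proposition~\ref{strg_prop}, which lets one upgrade a single witness $a\wedge b\wedge a = a\wedge b'\wedge a$ to coset equality.

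For part (i), first I would cover $B$: given $b \in B$, pick any $a_0 \in A$ and put $a := b\vee a_0\vee b$, which lies in $A$ since $\DD$ is a congruence; the absorption laws $S_3$--$S_6$ force $a\geq b$, so Lemma~\ref{order_id} gives $b = a\wedge b\wedge a\in A\wedge b\wedge A$. For disjointness, suppose $b_0\in A\wedge b_1\wedge A\cap A\wedge b_2\wedge A$ with $b_0 = a_i\wedge b_i\wedge a_i$. Since $b_0\leq a_i$, Lemma~\ref{order_id} also gives $b_0 = a_i\wedge b_0\wedge a_i$, so $a_i\wedge b_i\wedge a_i = a_i\wedge b_0\wedge a_i$, and Proposition~\ref{strg_prop}(iii) collapses $A\wedge b_i\wedge A$ onto $A\wedge b_0\wedge A$ for both $i$. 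The dual statement for cosets of $B$ in $A$ follows by the vertical dualization $(\wedge,\vee)\leftrightarrow(\vee,\wedge)$.

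For part (ii), Lemma~\ref{order_id} identifies $a\wedge B\wedge a = \{b\in B:b\leq a\}$. Any coset $A\wedge b_0\wedge A$ contains the image element $a\wedge b_0\wedge a$; conversely, if $a\wedge b\wedge a$ and $a\wedge b'\wedge a$ both lie in $A\wedge b_0\wedge A$, then by (i) the cosets $A\wedge(a\wedge b\wedge a)\wedge A$ and $A\wedge(a\wedge b'\wedge a)\wedge A$ both coincide with $A\wedge b_0\wedge A$, whence Proposition~\ref{strg_prop}(ii) forces $a\wedge b\wedge a = a\wedge b'\wedge a$. Part (iii) then follows at once: for $x\in B\vee a\vee B$ the assignment $\phi(x) := x\wedge b\wedge x$ is the (unique, by (ii)) element of $x\wedge B\wedge x\cap A\wedge b\wedge A$; any $y\in A\wedge b\wedge A$ with $y\leq x$ satisfies $y = x\wedge y\wedge x\in x\wedge B\wedge x$ by Lemma~\ref{order_id}, hence $y = \phi(x)$. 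The dual map $\psi(y) := y\vee a\vee y$ provides the inverse, and the two constructions clearly pick out the same relation $x\geq y$.

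Part (iv) is where I expect the main obstacle. For pairs lying in a single $\DD$-class the operations are already given by the rectangular structure. For mixed pairs $x\in A$, $y\in B$, associativity together with $y\wedge x\wedge y = y$ yield
\[ x\wedge y = x\wedge y\wedge x\wedge y = (x\wedge y\wedge x)\wedge y, \]
in which $x\wedge y\wedge x$ is precisely the value at $x$ of the coset bijection $B\vee x\vee B\to A\wedge y\wedge A$ from (iii), and the final $\wedge$ is performed inside the rectangular class $B$. The remaining mixed products $y\wedge x$, $x\vee y$, $y\vee x$ reduce identically, using $x\vee y\vee x = x$ for the join cases and the dual coset bijection $A\wedge b\wedge A\to B\vee a\vee B$. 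The real obstacle here is the bookkeeping: one must verify for each of the four mixed products that a single coset-bijection value combined with a single rectangular multiplication in the appropriate $\DD$-class recovers the product, the small identities $y\wedge x\wedge y = y$ and $x\vee y\vee x = x$ being exactly what makes this reduction go through.
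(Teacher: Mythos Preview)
The paper does not actually prove this theorem: it is quoted from \cite{Le93} and stated without proof, the surrounding text merely unpacking its consequences. So there is no in-paper argument to compare against; what matters is whether your proof stands on its own, and it does. Your use of Proposition~\ref{strg_prop} to collapse coset memberships, of Lemma~\ref{order_id} to identify images with down-sets, and of the preorder identities $y\wedge x\wedge y=y$, $x\vee y\vee x=x$ to factor each mixed product through a coset-bijection value and a rectangular multiplication is exactly the mechanism behind Leech's original result, and each of the four mixed-product reductions in part~(iv) checks out (e.g.\ $y\wedge x=y\wedge(x\wedge y\wedge x)$ and $x\vee y=x\vee(y\vee x\vee y)$ via the same insertion trick you used for $x\wedge y$).

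One minor point of presentation: in part~(iii) you should say explicitly why $\psi\circ\phi=\mathrm{id}$, since as written you only argue that $\phi$ is injective with image in $A\wedge b\wedge A$. The missing line is the one you have the ingredients for: $\phi(x)\leq x$ and $\phi(x)\leq\psi(\phi(x))$, with both $x$ and $\psi(\phi(x))$ lying in the image $\phi(x)\vee A\vee\phi(x)$ and in the single coset $B\vee a\vee B$, so the dual of~(ii) forces $x=\psi(\phi(x))$.
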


According to Theorem  \ref{coset_part}, in  a skew lattice $\textbf{S}$ with comparable $\DD$-classes $A>B$ where $a\in A$ and $b\in B$, a coset bijection $\varphi_{a,b}$ from $B\vee a\vee B$ to $A\wedge b\wedge A$ maps an element $x\in B\vee a\vee B$ in $A$ to an element $y\in A\wedge b\wedge A$ in $B$. If $x\in A$ then $x\wedge b\wedge x$ is the only element of $A\wedge b\wedge A$ bellow $x$:  if $x\wedge b\wedge x<x$ and $x\wedge b\wedge x, y\in A\wedge b\wedge A$, Theorem  \ref{coset_part} implies that $x\wedge b\wedge x =y$. Dually, the inverse of this coset bijection sends an element $y\in B$ to an element $y\vee a\vee y\in A$. When we consider a skew lattice $\textbf{S}$ with comparable $\DD$-classes $A>B>C$, a nonempty composition of coset bijections from $B\vee a\vee B$ to $B\wedge c\wedge B$ maps an element $x\in A$ to $(x\wedge b\wedge x)\wedge c\wedge (x\wedge b\wedge x)$ that equals $x\wedge c\wedge x$ by Proposition \ref{sub} in case the composition of these coset bijections is still a coset bijection.\medskip

\begin{proposition}\cite{Ka05}\label{sub}
Let $A>B>C$ denote three distinct but comparable equivalence classes of a skew lattice $S$. Then,
\begin{itemize} 
\item[(i)]  For any $c\in C$, the $A$-coset $A\wedge c\wedge A$ is contained in the $B$-coset $B\wedge c\wedge B$. Likewise, for any $a\in A$, the $C$-coset $C\vee a\vee C$ is contained in the $B$-coset $B\vee a\vee B$;
\item[(ii)] Given $a>b>c$ with $a\in A$, $b\in B$ and $c\in C$, if $\varphi $ is the coset bijection from $A$ to $B$ taking $a$ to $b$, $\psi $ is the coset bijection from $B$ to $C$ taking $b$ to $c$ and finally $\chi $ is the coset bijection from $A$ to $C$ taking $a$ to $c$, then $\psi \circ \varphi \subseteq \chi $.
\end{itemize} 
\end{proposition}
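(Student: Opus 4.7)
The plan is to establish (i) via a short associativity calculation combined with Proposition \ref{strg_prop}, and to split (ii) into a value-agreement claim and a domain-inclusion claim, the latter being the main obstacle.

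For (i) I focus on the meet statement (the join statement being dual). By Proposition \ref{strg_prop} it suffices to show that whenever $c,c''\in C$ lie in the same $A$-coset, they lie in the same $B$-coset. Given $b\in B$, pick $a\in A$ with $a\geq b$ (this exists as the image $b\vee A\vee b$ is a nonempty transversal of the $B$-cosets of $A$), so that $b=a\wedge b\wedge a$. Pure associativity then yields
$$b\wedge c\wedge b \;=\; (a\wedge b\wedge a)\wedge c\wedge (a\wedge b\wedge a) \;=\; a\wedge b\wedge (a\wedge c\wedge a)\wedge b\wedge a,$$
whose right-hand side depends on $c$ only through $a\wedge c\wedge a$. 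Consequently $a\wedge c\wedge a=a\wedge c''\wedge a$ forces $b\wedge c\wedge b=b\wedge c''\wedge b$, so Proposition \ref{strg_prop} gives $B\wedge c\wedge B=B\wedge c''\wedge B$, as needed.

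For (ii), the value agreement is the routine part. Unfolding the definitions,
$$(\psi\circ\varphi)(x) \;=\; (x\wedge b\wedge x)\wedge c\wedge(x\wedge b\wedge x);$$
two uses of the band-regularity identity $xyxzx=xyzx$ collapse this to $x\wedge b\wedge c\wedge b\wedge x$, and since $c\leq b$ implies $b\wedge c\wedge b=c$, the expression simplifies to $x\wedge c\wedge x=\chi(x)$.

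The remaining piece, and the main obstacle, is the domain inclusion $\mathrm{dom}(\psi\circ\varphi)\subseteq C\vee a\vee C$. Writing $b''=\varphi(x)=x\wedge b\wedge x$, we have $b''\leq x$. Since $x$ and $a$ share the $B$-coset of $A$, the identity $b''\vee x\vee b''=b''\vee a\vee b''$ holds, and $b''\leq x$ collapses the left-hand side to $x$, giving $x=b''\vee a\vee b''$. The hypothesis $\varphi(x)\in C\vee b\vee C$ lets us write $b''=c'\vee b\vee c'$ for some $c'\in C$, producing the seven-term expression
$$x \;=\; c'\vee b\vee c'\vee a\vee c'\vee b\vee c'.$$
The heart of the argument is reducing this to $c'\vee a\vee c'$: I alternate applications of the dual regularity identity $p\vee q\vee p\vee r\vee p=p\vee q\vee r\vee p$ with the simplifications $b\vee a=a$ and $a\vee b=a$ (from $b\leq a$), successively shortening the expression until only $c'\vee a\vee c'$ survives. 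Once $x\in C\vee a\vee C$ is in hand, $\chi(x)$ is defined and coincides with $(\psi\circ\varphi)(x)$ by the preceding value computation, completing $\psi\circ\varphi\subseteq\chi$.
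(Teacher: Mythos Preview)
Your proof is correct. For part (i) you invoke Proposition \ref{strg_prop} and reduce to the identity $b\wedge c\wedge b=a\wedge b\wedge(a\wedge c\wedge a)\wedge b\wedge a$, which is exactly the right move; for part (ii) your value computation via regularity and $b\wedge c\wedge b=c$ is clean, and your domain reduction $c'\vee b\vee c'\vee a\vee c'\vee b\vee c'=c'\vee a\vee c'$ by alternating $\vee$-regularity with $b\vee a=a=a\vee b$ goes through as written.

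The paper does not give a full proof (the result is cited from \cite{Ka05}); it only records a one-line sketch for (ii), namely the image-side inclusion
\[
(A\wedge b\wedge A\cap C\vee b\vee C)\wedge c\wedge(A\wedge b\wedge A\cap C\vee b\vee C)\subseteq A\wedge b\wedge A\wedge c\wedge A\wedge b\wedge A=A\wedge c\wedge A,
\]
which is precisely the $\wedge$-dual of your domain computation. So your route and the paper's are the same argument carried out on opposite sides of the bijection: you push the domain into $C\vee a\vee C$ using $\vee$-regularity, whereas the paper pushes the image into $A\wedge c\wedge A$ using $\wedge$-regularity. Your version has the advantage of being fully written out, including the value-agreement step that the paper's sketch leaves implicit.
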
 

The second part of this result follows from the inclusion 

$$(A\wedge b\wedge A\bigcap C\vee b\vee C)\wedge c\wedge (A\wedge b\wedge A\bigcap C\vee b\vee C) \subseteq A\wedge b\wedge A\wedge c\wedge A\wedge b\wedge A=A\wedge c\wedge A$$ showing us that the composite partial bijection $\psi \circ \varphi $ if nonempty is part of a coset bijection from a $C$-coset of $A$ to an $A$-coset of $C$. \medskip


\textit{Normality} for bands is characterized by the identity $uxyv=uyxv$. This is equivalent to $eSe$ being a semilattice for all $e\in  S$, or else to  $B$ covering $A$ whenever $A\geq B$ are comparable $\DD $ classes in $S$. By $B$ covering $A$ is meant
\begin{center}
$\forall a\in A$ $\exists ! b\in B$ such that $a\geq b$.
\end{center}\medskip

Recall that a skew lattice is \textit{normal} if $(S, \wedge )$ is a normal band. Normal skew lattices form a subvariety of skew lattices. Any maximal normal band in a ring $R$ is a normal skew lattice under $\nabla $ and the usual multiplication \cite{Ka05}. Hence the relevance of this property. This reveals the coset structure of normal skew lattices. Moreover, coset bijections are closed under composition with the composition of adjacent coset bijections being nonempty, making sure that compositions of coset bijections are coset bijections. \medskip


A skew lattice is \textit{categorical} if nonempty composites of coset bijections are coset bijections. Categorical skew lattices form a variety \cite{Le93}. The study of distributive skew chains has been a relevant motivation to the studies on categorical skew lattices. Skew lattices in rings as well as normal skew lattices are examples of algebras in this variety \cite{Le93}. The example represented by the diagram on the Figure \ref{fig_noncat} shows a skew lattice that need not be categorical.

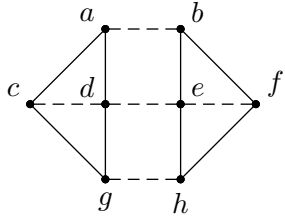
\begin{figure}\label{noncat}
\begin{center}
\begin{pspicture}(-2,-2)(2,2)

\psline[linewidth=0.5 pt,linestyle=dashed]{*-*}(-1.5,0)(-0.5,0) 
\psline[linewidth=0.5 pt,linestyle=dashed]{*-*}(0.5,0)(1.5,0)
\psline[linewidth=0.5 pt,linestyle=dashed]{*-*}(-0.5,1)(0.5,1)
\psline[linewidth=0.5 pt,linestyle=dashed]{*-*}(-0.5,-1)(0.5,-1)
\psline[linewidth=0.5 pt,linestyle=dashed]{*-*}(-0.5,0)(0.5,0)

\psline[linewidth=0.5pt]{*-*}(-1.5,0)(-0.5,1) 
\psline[linewidth=0.5 pt]{*-*}(-0.5,0)(-0.5,1) 
\psline[linewidth=0.5pt]{*-*}(0.5,0)(0.5,1) 
\psline[linewidth=0.5 pt]{*-*}(1.5,0)(0.5,1)

\psline[linewidth=0.5pt]{*-*}(0.5,0)(0.5,-1)
\psline[linewidth=0.5pt]{*-*}(1.5,0)(0.5,-1) 
\psline[linewidth=0.5pt]{*-*}(-0.5,0)(-0.5,-1) 
\psline[linewidth=0.5pt]{*-*}(-1.5,0)(-0.5,-1)

\uput[140](-0.5,0){$d$} 
\uput[140](-1.5,0){$c$} 
\uput[40](0.5,0){$e$}
\uput[40](1.5,0){$f$}  
\uput[40](0.5,1){$b$} 
\uput[140](-0.5,1){$a$} 
\uput[270](-0.5,-1){$g$} 
\uput[270](0.5,-1){$h$}

\end{pspicture}
\end{center}

\caption{A non categorical skew lattice}\label{fig_noncat}

\end{figure}\medskip

The name comes directly from the definition as \textit{categorical skew lattices} are the ones for whom coset bijections form a category under certain conditions. A categorical skew lattice is named \textit{strictly categorical} if compositions of coset bijections are never empty. Already in 1993, Leech defines categorical skew lattices within his geometric perspective on skew lattices \cite{Le93}. For any strictly categorical skew lattice \textbf{S}, define the category $\CC $ by the following:  

\begin{itemize}
\item[] the objects of $\CC$ are the $\DD $-classes of $S$ ;
\item[] for comparable  classes $A > B$, $\CC (A, B)$ are all the coset bijections from the $B$-cosets in $A$ to the $A$-cosets in $B$. Otherwise, $\CC (A, B)$ consists of the empty bijection; 
\item[] $\CC (A, A)$ consists of the identity bijection on $A$ ;
\item[] morphism compositions is the usual composition of partial bijection. 
\end{itemize}\medskip

The category is modified in case \textbf{S} is just categorical by adding the requirement that, for each pair $A\geq B$, $\CC(A,B)$ contains the empty bijection.

We call $\CC $ the \textit{coset category}. By the nature of the coset bijections, $\CC $ and its dual category $\CC ^{op}$ coincide. On the other hand, due to the fact that coset bijections, when existing, are unique by Theorem \ref{coset_part}, $\vert \CC (A,B) \vert \leqslant 1$ and therefore this is a thin category. \medskip

Leech pointed out to the author that Theorem \ref{coset_part} together with Proposition 13 show us that the union of this family of bijections (where each bijection is seen as a set of ordered pairs) is equivalent to the order structure of $A\cup B$ as it is shown in the following:

\begin{theorem}
Let \textbf{S} be a categorical skew lattice and $A\geq B$ comparable $\DD $-classes. Then,
$$\cup \CC (A,B) = \cup \set{\varphi_{a,b}: a\in A,b\in B} =\geq _{A\times B},$$ where $\varphi _{a,b}$ is the coset bijection between $B\vee a\vee B$ and $A\wedge b\wedge A$.
\end{theorem}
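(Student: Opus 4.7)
The plan is to establish the two inclusions of the equality $\cup\CC(A,B) = \;\geq_{A\times B}$ separately, relying almost entirely on Theorem~\ref{coset_part}. Note that the first equality in the statement, $\cup\CC(A,B) = \cup\{\varphi_{a,b}: a\in A,b\in B\}$, is essentially a definitional unpacking: the hom-set $\CC(A,B)$ is by construction the collection of coset bijections between $B$-cosets in $A$ and $A$-cosets in $B$, and (in view of Proposition~\ref{strg_prop}) every such bijection has the form $\varphi_{a,b}$ for some choice of representatives $a\in A,b\in B$.

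For the forward inclusion $\cup\CC(A,B) \subseteq \;\geq_{A\times B}$, I would appeal to clause (iii) of Theorem~\ref{coset_part} directly. Each $\varphi_{a,b}$, viewed as a set of ordered pairs, consists precisely of pairs $(x,y)\in(B\vee a\vee B)\times(A\wedge b\wedge A)$ for which $x\geq y$. In particular every pair in $\varphi_{a,b}$ lies in $\geq_{A\times B}$, and taking the union preserves this containment.

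For the reverse inclusion, suppose $(x,y)\in A\times B$ satisfies $x\geq y$. The plan is to exhibit a single coset bijection containing $(x,y)$. By clause (i) of Theorem~\ref{coset_part}, $A$ is partitioned by the $B$-cosets, and the obvious coset containing $x$ is $B\vee x\vee B$ (since $x$ is itself a legitimate representative). Dually $y$ lies in the $A$-coset $A\wedge y\wedge A$. By clause (iii) there is a coset bijection $\varphi_{x,y}\colon B\vee x\vee B\to A\wedge y\wedge A$ whose defining rule pairs elements iff they are $\geq$-related. Since $x\geq y$ by hypothesis, $\varphi_{x,y}$ sends $x$ to $y$, so $(x,y)\in\varphi_{x,y}\subseteq\cup\CC(A,B)$.

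I do not expect a serious obstacle here. The only subtlety is verifying that $\varphi_{x,y}(x)=y$ is unambiguous, i.e.\ that no other element of $A\wedge y\wedge A$ is $\leq$-related to $x$; this is guaranteed by clause (ii) of Theorem~\ref{coset_part}, which says the image of $x$ in $B$ is a transversal of the $A$-cosets in $B$, so each coset contains exactly one element below $x$. It is worth remarking that the hypothesis that $\mathbf S$ is \emph{categorical} is not actually invoked in the argument: the claim only concerns the single hom-set $\CC(A,B)$ for one pair of comparable $\DD$-classes, and categoricity is a condition on composition. The hypothesis merely ensures that $\CC(A,B)$ is the expected hom-set of the coset category introduced earlier.
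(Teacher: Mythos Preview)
Your argument is correct and is essentially a careful unpacking of the paper's one-line proof, which simply observes that $\DD$-classes are antichains (Proposition~\ref{order_dclass}) so that the coset bijections of Theorem~\ref{coset_part} account for all order relations between $A$ and $B$. Your observation that categoricity is not actually used is also accurate and worth noting.
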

 
\begin{proof} 
Just observe that all $\DD $-classes are anti-chains and therefore coset bijections describe all the order structure of the skew lattice. 
\end{proof} 
 
This last result reveals the important role of the coset structure in the study of skew lattices as it describes the order structure and provides us also the information on the structural effect of the congruence $\DD $. We conjecture an isomorphism between the coset category and the algebraic category for categorical skew lattices, though this is still an open problem. 

Moreover, the study of the coset category is yet to be done on a categorical perspective. Indeed, this is an unusual category that arises from the particular morphisms that it comprehends. Recent developments have been made in the study of the coset structure of a skew lattice with the characterization of several subvarieties of these algebras using coset identities in \cite{Jo09} and in \cite{Jo10} or the study of distributivity and cancellation in \cite{Ka08} and \cite{Le11}. As further the work on the coset structure of a skew lattice develops as more relevant is the challenge to study the coset category and strengthen the foundations of this new approach.

\section*{Acknowledgements}

We would like to thank to Karin Cvetko-Vah for her guidance in skew lattice Theory and her ideas and enthusiasm on many of the topics here in development. Also we also thank to Andrej Bauer for his orientation and questions in Category Theory topics as well as it's interpretation in skew lattice theory; to Jonathan Leech for the discussions on categorical skew lattices as well as his comments on the main topics here presented; to Margarita Ramalho for the inspiration for this approach; to Matja\v z Omladi\v c for his support; to Dijana Cerovski for her involvement; and to Mar\v cin Szamotulski for our short coffee break discussions on several "categorical" topics.

\end{document}